\documentclass[pdflatex,sn-mathphys]{sn-jnl}% Math and Physical Sciences Reference Style
\jyear{2021}%

\theoremstyle{thmstyleone}%
\newtheorem{theorem}{Theorem}%  meant for continuous numbers
\theoremstyle{thmstyletwo}%
\newtheorem{example}{Example}%

\theoremstyle{thmstylethree}%

\usepackage[leqno]{mathtools}
\usepackage{enumerate}
\usepackage[mathscr]{eucal}
\normalbaroutside
\usepackage{tikz}
\usepackage[center]{caption}

\usepackage{amsmath}
\usepackage{amssymb}
\usepackage{amsthm}

\usepackage{mathtools}
\usepackage{stmaryrd}
\usepackage{tikz-cd}

\theoremstyle{plain}

\newtheorem{cor}{Corollary}[theorem]
\newtheorem{lemma}{Lemma}[section]

\theoremstyle{definition}

\begin{document}

\title[]{Subgroup perfect codes of $ S_n $ in Cayley graphs}

%%=============================================================%%
%% Prefix	-> \pfx{Dr}
%% GivenName	-> \fnm{Joergen W.}
%% Particle	-> \spfx{van der} -> surname prefix
%% FamilyName	-> \sur{Ploeg}
%% Suffix	-> \sfx{IV}
%% NatureName	-> \tanm{Poet Laureate} -> Title after name
%% Degrees	-> \dgr{MSc, PhD}
%% \author*[1,2]{\pfx{Dr} \fnm{Joergen W.} \spfx{van der} \sur{Ploeg} \sfx{IV} \tanm{Poet Laureate} 
%%                 \dgr{MSc, PhD}}\email{iauthor@gmail.com}
%%=============================================================%%

\author[1]{\fnm{Ankan} \sur{Shaw}}\email{ankanf22@gmail.com}
\equalcont{These authors contributed equally to this work.}

\author[1]{\fnm{Shibesh} \sur{Kotal}}\email{shibeshkotal656@gmail.com}

\author*[1]{\fnm{Satya} \sur{Bagchi}}\email{sbagchi.maths@nitdgp.ac.in}

%\equalcont{These authors contributed equally to this work.}

%\author[1,2]{\fnm{Third} \sur{Author}}\email{iiiauthor@gmail.com}
\equalcont{These authors contributed equally to this work.}

\equalcont{These authors contributed equally to this work.}

\affil[1]{\orgdiv{Department of Mathematics}, \orgname{National Institute of Technology Durgapur}, \orgaddress{\city{Durgapur}, \postcode{713209}, \state{West Bengal}, \country{INDIA}}}

%\affil[2]{\orgdiv{Department of Mathematics}, \orgname{National Institute of Technology Durgapur}, \orgaddress{\street{Street}, \city{Durgapur}, \postcode{713209}, \state{West Bengal}, \country{INDIA}}}

%\affil[3]{\orgdiv{Department}, \orgname{Organization}, \orgaddress{\street{Street}, \city{City}, \postcode{610101}, \state{State}, \country{Country}}}

%%==================================%%
%% sample for unstructured abstract %%
%%==================================%%

\abstract{A perfect code in a graph $\Gamma = (V, E)$ is a subset $C$ of $V$ such that no two vertices in $C$ are adjacent and every vertex in $V \setminus C$ is adjacent to exactly one vertex in $C$. A subgroup $H$ of a group $G$ is called a subgroup perfect code of $G$ if there exists a Cayley graph of $G$ which admits $H$ as a perfect code. In this work, we present a classification of cyclic 2-subgroup perfect codes in $ S_n$. We analyze these subgroup codes, detailing their structure and properties. We extend our discussion to various classes of subgroup codes in the symmetric group $ S_n $, encompassing both commutative and non-commutative cases. We provide numerous examples to illustrate and support our findings.}

%%================================%%
%% Sample for structured abstract %%
%%================================%%

%\subjclass[2020]{Primary 94B05, 94B15, 20C05}
\keywords{Cayley graph, Perfect code, Subgroup perfect code, Symmetric group}
%%\pacs[JEL Classification]{D8, H51}

\pacs[MSC Classification]{05C25, 05C69, 94B99, 20B30}

\maketitle

\section{Introduction}

Every group and every graph taken into consideration in the study are finite. In this work, we use conventional group-theoretic vocabulary and notation, which are available, for instance, in \cite{Huang2018}.\\

\noindent Consider a graph  $\Gamma$ with edges set $E(\Gamma)$ and vertices set  $V(\Gamma)$. If there is no path in $\Gamma$ connecting two vertices, the distance in $\Gamma$ between them is equal to $ \infty$. Otherwise, it equals the shortest path length between the two vertices. Let $ t $ be a natural number and $ C $ be a subset of $V(\Gamma)$. If every vertex of $ \Gamma $ is at a distance no greater than $t$ from precisely one vertex in $C$, then $ C $ \cite{Chen2020,Krotov2020} is said to be a perfect $t$-code in $ \Gamma $. A perfect code is generally a perfect $1$-code. In equivalent terms, if $C$ is an independent set of $\Gamma$ and each vertex in $V(\Gamma) \setminus C$ is adjacent to precisely one vertex in $C$, then subset $C$ of $V(\Gamma)$ is a perfect code in $\Gamma$. A perfect code in a graph is also referred to as an efficient dominating set \cite{Dejter2003, Lee2001} of the graph.\\

\noindent Initially, Biggs \cite{Biggs1973}  made known the concept of perfect $t$-codes in a graph. He defined perfect $t$-codes under the Hamming and Lee metrics as a generalization. In \cite{MacWilliams1977}, the authors showed that the Hamming distance of two code words of length $n$ over an alphabet of size $ m > 1 $ corresponds precisely to the graph distance in the Hamming graph $ H(n, m) $. Thus, under the Hamming metric, perfect $t$-codes in $H(n, m)$ are precisely those defined in the classical context. Similarly to this, the graph distance in the Cartesian product $L(n, m)$ of $n$ copies of the cycle of length $m$ equals the Lee distance \cite{Horak} between words of length $n$ across an alphabet of size $m > 2$.\\

\noindent In coding theory, the question of whether perfect codes exist under a particular metric is crucial. In the early $ 1970 $s, over Galois fields, the complete set of parameters for which perfect codes exist was categorized, see \cite{Tietäväinen, van1971, Zinoviev1973}. A conjecture made in $1970$ by Golomb and Welch \cite{Golomb1970} stated that there does not exist any $ n $ length $q$-ary perfect $r$-codes with respect to the Lee metric for all $n\geq 3$, $r \geq 2$, and $ q > 2r$. Despite extensive study spanning more than half a century, this conjecture remains unresolved \cite{Horak}. Among the most effective methods available for demonstrating nonexistence findings for perfect codes is the well-known Lloyd's Theorem \cite{Lloyd}. In distance-transitive graphs, Biggs \cite{Biggs1973} extended Lloyd's Theorem to perfect codes. A lot of research has been focused on perfect codes in distance-transitive graphs and association schemes \cite{Brouwer1989} since the groundbreaking studies of Biggs \cite{Biggs1973}. In \cite{Chihara}, Chihara showed that there exist infinitely many classes of classical distance-regular graphs that do not have any non-trivial perfect codes. She also made the same statement for the Grassmann graphs and the bilinear forms graphs. In \cite{Martin}, The authors proved that no perfect codes exist in such graphs with a new approach. One significant class of distance-regular Cayley graphs is the Doob graph family. The authors characterized all possible subgroup perfect code parameters in Doob graphs in \cite{Shi}. In \cite{Krotov2020}, the author gave a necessary and sufficient condition for the existence of the perfect codes in the Doob graph. For more treatment on perfect codes in distance-regular graphs, see \cite{Thas1980} and \cite{Hammond}.
 
\noindent In recent years, perfect codes in Cayley graphs have drawn a lot of interest; see \cite{Huang2018, Dejter2003, Feng2017, Zhou2019, van1975}. In \cite{Huang2018}, Huang et al. gave a new idea about the perfect code in the Cayley graph. They defined that a subset $S$ of a group $G$ is said to be a perfect code of $G$ if it is a perfect code of some Cayley graph of $G$. In particular, if $ S $ is a subgroup of $ G $, then $ S $ is said to be a perfect subgroup of $ G $. In \cite{Huang2018}, some intriguing findings on normal subgroups of a group being perfect codes were discovered by Huang et al. These findings are then expanded to encompass general subgroups within a group \cite{Chen2020, Ma, Zhang2021}. Let $ G $ be a group and $H$ be a subgroup of $G$. In \cite{Zhang2021, Zhang2022}, the authors showed that $H$ is a perfect code of $G$ if there exists a Sylow $2$-subgroup of $H$, which is a perfect code of $G$. They also discussed the metabelian groups. In \cite{Zhang2023}, Zhou proved that $H$ is a perfect code of $G$ if and only if a Sylow $2$-subgroup of $H$ which is a perfect code of $G$ if and only if every Sylow $2$-subgroup of $H$ which is a perfect code of $G$. Consequently, characterizing a $ H $ subgroup code in $ G $ is equivalent to characterizing a $ 2 $-Sylow subgroup of $ H $ in $ G $. When dealing with higher-order groups (especially for the symmetric groups), it is challenging to decide whether an $2$-Sylow subgroup is perfect. In this paper, we focus on some subgroup codes of the symmetric group $ S_n$. We discuss whether some subgroups are perfect in $S_n$ by looking into the generators of the subgroups of $S_n$ and providing some conditions. We also construct the higher-order subgroup codes from smaller-order subgroup codes and discuss the perfectness of such codes in $S_n$.\\  

\noindent Excluding the introductory part, the present work is demarcated into three sections. In Section $2$, we give some known results and alphabets that we use in this work. We provide a complete classification of subgroup codes whose $ 2 $-Sylow subgroups are cyclic in Section $ 3 $. In Section $4$, we discuss some classes of subgroup (commutative and non-commutative) codes in $S_n$ and furnish several examples and notes concerning our results. We finish with conclusions and directions for possible future research.

\section{Preliminaries}
In this Section, we introduce some definitions and preliminary results for the reader's convenience. For a finite set $ S $, $ |S| $ denotes the number of elements in $ S $. Let $ G $ be a finite group and $ H $ a subgroup of $ G $. We use $ |G : H| $ to represent the index $ |G|/|H| $ of $ H $ in $ G $, and $ N_G(H) $ to denote the normalizer $ \{g \in G : g^{-1}Hg = H\} $ of $ H $ in $ G $. For any subset $ A $ of $ G $, $ \langle A \rangle $ denotes the subgroup of $ G $ generated by $ A $. The order of an element $ g \in G $ is denoted by $ o(g) $ and the identity element of the group $ G $ is represented by $ 1_G $. If $ G $ contains an element $ g $ such that $ o(g) = 2$, then $ g $ is called an involution of $ G $. A subgroup $ P $ of $ G $ with order $ p^a $ is said to be a $ p $-subgroup of $ G $. If, in addition, $ p $ does not divide $ |G : P| $, then $ P $ is said to be a Sylow $ p $-subgroup of $ G $.

\begin{lemma}\cite{Zhang2021}\label{lmn 2.2}
Let $G$ be a group and $H$ a subgroup of $G$. Then $H$ is a perfect code of $G$ if and only if it has a Cayley transversal in $G$.    
\end{lemma}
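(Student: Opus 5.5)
We take the standard definitions. A Cayley graph $\mathrm{Cay}(G,S)$ has an inverse-closed connection set $S\subseteq G\setminus\{1_G\}$, and $x\sim y$ iff $yx^{-1}\in S$. A Cayley transversal of $H$ in $G$ is a right transversal $T$ of $H$ with $1_G\in T$ and $T^{-1}=T$; equivalently, $T\setminus\{1_G\}$ is inverse-closed. The plan is to translate the perfect-code condition on $H$ in $\mathrm{Cay}(G,S)$ into a statement about how $S\cup\{1_G\}$ meets the right cosets $Hg$. Both directions then become short set-theoretic arguments.

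For the forward direction, suppose $H$ is a perfect code in $\mathrm{Cay}(G,S)$. The first step is independence: for $h,h'\in H$, $h'h^{-1}\in H$, so no element of $S$ lies in $H$, i.e.\ $S\cap H=\emptyset$. The second step is domination: a vertex $g\notin H$ is adjacent to exactly one $h\in H$, meaning exactly one $h\in H$ has $gh^{-1}\in S$. As $h$ runs over $H$, the element $gh^{-1}$ runs over the coset $gH$. Applying this to $g^{-1}$ instead, and using $S=S^{-1}$, this says $|S\cap Hg|=1$ for every $g\notin H$. Combining the two steps, $T:=S\cup\{1_G\}$ meets every right coset of $H$ in exactly one element, so it is a right transversal. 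It contains $1_G$ and is inverse-closed, hence it is a Cayley transversal.

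For the converse, let $T$ be a Cayley transversal and put $S=T\setminus\{1_G\}$. Then $S$ is inverse-closed and avoids $1_G$, so $\mathrm{Cay}(G,S)$ is defined. Because $T$ meets $H$ only in $1_G$, we get $S\cap H=\emptyset$, so $H$ is independent. Because $T$ meets each coset $Hg\neq H$ exactly once, reversing the computation above shows that every $g\notin H$ has exactly one neighbour in $H$. Hence $H$ is a perfect code.

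The only delicate point is the left/right bookkeeping: the neighbours of $g$ in $H$ correspond to elements of $S\cap gH$, and we need $S\cap Hg^{-1}$. Inverse-closure of $S$ turns $|S\cap gH|=1$ into $|S\cap Hg^{-1}|=1$, and $g^{-1}$ ranges over $G\setminus H$ exactly when $g$ does. I expect this to be the step needing care; everything else is immediate.
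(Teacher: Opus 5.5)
Your proof is correct. The paper gives no proof of this lemma; it is only quoted from \cite{Zhang2021}. Your argument is the standard one: the neighbours in $H$ of a vertex $g\notin H$ correspond bijectively to $S\cap gH$, so $S\cup\{1_G\}$ is an inverse-closed transversal, and conversely.

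Your conventions ($1_G\in T$, right rather than left cosets) lose no generality. An inverse-closed left transversal is automatically an inverse-closed right transversal. Its representative of $H$ is either $1_G$ or an involution, and can be replaced by $1_G$ without breaking inverse-closure.
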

 
\begin{lemma}\cite{Zhang2021}
Let $ G $ be a group and $H$ a subgroup of G. Then $H$ is a perfect code of $G$ if and only if it is a perfect code of any subgroup of $G$ which contains $H$.   
\end{lemma}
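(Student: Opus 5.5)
The plan is to read the statement as follows: $H$ is a perfect code of $G$ if and only if $H$ is a perfect code of every subgroup $K$ with $H\le K\le G$. Both directions go through Lemma~\ref{lmn 2.2}, which turns perfectness into the existence of a Cayley transversal. A Cayley transversal of $H$ in $G$ is a left transversal $T$ of $H$ in $G$ that is closed under inversion, i.e.\ $T^{-1}=T$.

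The direction ``$\Leftarrow$'' is immediate. If $H$ is a perfect code of every subgroup of $G$ containing $H$, then in particular it is a perfect code of $K=G$.

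For ``$\Rightarrow$'', suppose $H$ is a perfect code of $G$. By Lemma~\ref{lmn 2.2}, fix a Cayley transversal $T$ of $H$ in $G$, and let $K$ be any subgroup with $H\le K\le G$. I claim that $T_K:=T\cap K$ is a Cayley transversal of $H$ in $K$; Lemma~\ref{lmn 2.2}, applied to the pair $(K,H)$, then gives that $H$ is a perfect code of $K$. The claim has three parts.

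\begin{enumerate}[(i)]
\item \emph{Every left coset of $H$ in $K$ meets $T_K$.} Let $x\in K$. Since $T$ is a transversal in $G$, there is $t\in T$ with $tH=xH$. Then $t\in xH\subseteq K$, because $H\le K$ and $x\in K$. Hence $t\in T_K$.
\item \emph{Each coset meets $T_K$ at most once.} This holds because distinct elements of $T$ already lie in distinct cosets of $H$.
\item \emph{$T_K$ is closed under inversion.} If $t\in T\cap K$, then $t^{-1}\in T$ because $T^{-1}=T$, and $t^{-1}\in K$ because $K$ is a subgroup. So $T_K^{-1}=T_K$.
\end{enumerate}

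If the definition of Cayley transversal used in \cite{Zhang2021} also requires $1_G\in T$, this is inherited as well, since $1_G\in K$.

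The only step needing any care is (i): it uses $H\le K$ to guarantee that the representative chosen in $G$ for a coset inside $K$ actually lies in $K$. Everything else is formal, so I do not expect a genuine obstacle.
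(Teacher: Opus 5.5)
Your proof is correct. Reading ``any'' as ``every'' is the right interpretation: under the ``some'' reading the statement would already fail for $K=H$, since $H$ is always a perfect code of itself. With that reading, the converse is just the case $K=G$, and $T\cap K$ is an inverse-closed left transversal of $H$ in $K$ exactly because $xH\subseteq K$ for $x\in K$.

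The paper gives no proof of its own; it quotes the lemma from the literature. Your restriction argument via the transversal criterion of Lemma~\ref{lmn 2.2} is the standard proof of this fact. In graph language it amounts to passing from $\mathrm{Cay}(G,S)$ to the induced subgraph $\mathrm{Cay}(K,S\cap K)$.
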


\begin{lemma}\cite{Zhang2021}
Let $G$ be a group and $H$ a subgroup of $G$. If $H$ is a perfect code of $G$, then for any $ g\in G $, $ g^{-1}Hg $ is a perfect code of $G$. More specifically, if $H$ is a perfect code in $ Cay(G, S)$ for some connection set S of G, then $ g^{-1}Hg $ is a code in $ Cay(G, g^{-1}SG)$ is a perfect code in $G$. 
\end{lemma}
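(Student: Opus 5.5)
Write $S' = g^{-1}Sg$ and $H' = g^{-1}Hg$. The plan is to show that conjugation by $g$, that is $\varphi\colon G\to G$, $x\mapsto g^{-1}xg$, is a graph isomorphism from $\mathrm{Cay}(G,S)$ onto $\mathrm{Cay}(G,S')$ carrying $H$ onto $H'$. Since being a perfect code is a purely graph-theoretic property, it is preserved by isomorphisms, so $H'$ will be a perfect code in $\mathrm{Cay}(G,S')$ and hence a perfect code of $G$.

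I would carry this out in four steps.

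\textbf{Step 1: $S'$ is a valid connection set.} Since $S=S^{-1}$, we have $(g^{-1}sg)^{-1}=g^{-1}s^{-1}g\in S'$. Since $1_G\notin S$, we also have $1_G\notin S'$, because $g^{-1}sg=1_G$ forces $s=1_G$.

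\textbf{Step 2: $\varphi$ is an isomorphism.} The map $\varphi$ is a bijection of $G$, in fact an automorphism. Recall that $x\sim y$ in $\mathrm{Cay}(G,S)$ if and only if $yx^{-1}\in S$; the argument is symmetric for the other adjacency convention. Then
\[
\varphi(y)\varphi(x)^{-1}=g^{-1}yx^{-1}g,
\]
and this lies in $S'$ exactly when $yx^{-1}\in S$. So $\varphi$ preserves adjacency and non-adjacency.

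\textbf{Step 3: transport the code.} Because $\varphi$ preserves adjacency, the independence of $H$ transfers to $\varphi(H)=H'$. Likewise, each vertex outside $H'$ has exactly one neighbour in $H'$, since its preimage has exactly one neighbour in $H$.

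\textbf{Step 4: conclude.} By the definition of subgroup perfect code, $H'$ is a perfect code of $G$.

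As an alternative, one could use Lemma \ref{lmn 2.2}. If $T$ is a Cayley transversal of $H$, meaning an inverse-closed transversal containing $1_G$ of the relevant cosets, then $g^{-1}Tg$ is one for $H'$, since conjugation maps cosets $Hx$ to cosets $H'g^{-1}xg$ and preserves inverses.

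There is no genuine obstacle here. The only point needing care is Step 2: one must match the adjacency convention (left versus right multiplication) so that conjugation really preserves it. I would note that $S$ is replaced by $g^{-1}Sg$, which reads "$g^{-1}SG$" in the statement and is presumably a typo.
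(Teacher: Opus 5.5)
Your proof is correct. Conjugation $x\mapsto g^{-1}xg$ is an automorphism of $G$, so it is a graph isomorphism $\mathrm{Cay}(G,S)\to\mathrm{Cay}(G,g^{-1}Sg)$ that carries $H$ onto $g^{-1}Hg$, and perfect codes are preserved under graph isomorphisms. Your alternative through inverse-closed transversals and Lemma~\ref{lmn 2.2} is also valid, and you are right that ``$g^{-1}SG$'' should read $g^{-1}Sg$.

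The paper gives no proof of its own here; it only quotes the lemma from \cite{Zhang2021}, so there is nothing internal to compare against. Your isomorphism argument is the standard one, and it works verbatim for any automorphism $\alpha$ of $G$: $\alpha(H)$ is a perfect code in $\mathrm{Cay}(G,\alpha(S))$.
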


\begin{lemma}\cite{Huang2018}
  Let $G$ be a group and $H$ a normal subgroup of $G$. Then $H$ is a perfect code of $G$ if and only if for all $ x\in G $, $ x^2\in H $ implies $ (xh)^2 = 1 $ for some $ h\in H .$ 
\end{lemma}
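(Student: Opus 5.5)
The plan is to prove both directions through the characterization of Lemma~\ref{lmn 2.2}. A subset $S$ with $1\notin S$ and $S=S^{-1}$ gives $H$ as a perfect code in $\mathrm{Cay}(G,S)$ precisely when $S\cup\{1\}$ is a left transversal of $H$ in $G$ that is closed under inverses. Such an inverse-closed transversal is exactly a Cayley transversal. Since $H$ is normal, left and right cosets coincide, so the problem reduces to choosing an inverse-closed set of coset representatives in $G/H$.

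\emph{Necessity.} Suppose $T$ is an inverse-closed transversal containing $1$, and let $x\in G$ with $x^2\in H$. Then the coset $xH$ satisfies $(xH)^{-1}=x^{-1}H=xH$, because $x^{-1}=x\cdot x^{-2}\in xH$. Let $t\in T$ be the representative of $xH$. Then $t^{-1}\in T$ lies in $(xH)^{-1}=xH$, so uniqueness of the representative forces $t^{-1}=t$, that is, $t^2=1$. Writing $t=xh$ with $h\in H$ gives $(xh)^2=1$.

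\emph{Sufficiency.} The key step is to partition $G/H$ using the inversion map $gH\mapsto g^{-1}H$, which is a well-defined involution on $G/H$ because $H$ is normal. Its orbits have size $1$ or $2$.
\begin{itemize}
\item A \emph{fixed coset} satisfies $xH=x^{-1}H$, which is equivalent to $x^2\in H$. By hypothesis there is $h$ with $(xh)^2=1$, and we take $xh$ as the representative. For the coset $H$ itself we take $1$.
\item For an \emph{orbit of size $2$}, $\{yH,\,y^{-1}H\}$, we take any $y\in yH$ and use $y^{-1}$ for the other coset.
\end{itemize}
The resulting set $T$ meets every coset exactly once and satisfies $T=T^{-1}$, so it is a Cayley transversal. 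Setting $S=T\setminus\{1\}$ and applying Lemma~\ref{lmn 2.2} finishes the proof.

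The only delicate point is checking that the pairing in the second case is consistent. The coset $y^{-1}H$ is distinct from $yH$, and inverting the whole coset $yH$ gives exactly $y^{-1}H$. Both facts rest on the normality of $H$, which is what makes the inversion map on cosets well defined. Apart from this, the argument is routine bookkeeping.
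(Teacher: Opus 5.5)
Your proof is correct: for necessity, the representative of an inversion-fixed coset $xH$ in an inverse-closed transversal must be its own inverse. For sufficiency, you choose self-inverse representatives $xh$ in the fixed cosets and inverse pairs $y,y^{-1}$ in the two-element orbits of $gH\mapsto g^{-1}H$, and this is the standard argument for the result. The paper gives no proof of its own, since it simply quotes the lemma from \cite{Huang2018}, so there is nothing further to compare.
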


\begin{lemma}\cite{Zhang2022}\label{lmn2.5}
Let $G$ be a group and $H$ a subgroup of $G$. Then $H$ is not a perfect code of $G$ if and only if there exists a double coset $ D = HxH $ with $ D = D^{-1} $ having an odd number of left cosets of $ H $ in $ G $ and containing no involution. In particular, if $H$ is not a perfect code of $G$, then there exists a $2$-element $ x\in G\setminus H $ such that $ x^2\in H $, $ |H : H\cap xHx^{-1}| $ is odd, and $ HxH $ contains no involution.
\end{lemma}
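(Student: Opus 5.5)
The plan is to use Lemma~\ref{lmn 2.2}. It reduces the question to whether $G$ has a \emph{Cayley transversal} of $H$, meaning an inverse-closed left transversal $T=T^{-1}$. So we must decide when such a $T$ exists.

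\textbf{Reduction to one double coset.} Split $G$ into double cosets $HxH$. Inversion maps $HxH$ onto $Hx^{-1}H$, so double cosets are either self-inverse or come in pairs $\{D,D^{-1}\}$ with $D\neq D^{-1}$. A left coset lying in $D$ inverts to a right coset lying in $D^{-1}$. Hence $T$ is a Cayley transversal exactly when, for each $D$, the part $T\cap D$ is a left transversal of the left cosets of $H$ in $D$, and these parts fit together under inversion.

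\textbf{Pairs $D\neq D^{-1}$.} Inside $D$ there are $|D|/|H|$ left cosets and the same number of right cosets. Form the bipartite multigraph on left and right cosets, with one edge per element of $xH\cap Hy$. Each vertex has degree $|H|$, so by Hall's theorem it has a perfect matching. This gives a set $T_D$ that is simultaneously a left and a right transversal in $D$. Put $T_D\cup T_D^{-1}$ into $T$; it is a left transversal of both $D$ and $D^{-1}$.

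\textbf{Self-inverse $D$, necessity.} Let $D=D^{-1}$ contain $m$ left cosets. If $m$ is odd and $D$ contains no involution, then inversion acts on $T\cap D$ (a set of size $m$) without fixed points, since $D\neq H$ so $1\notin D$. An odd set cannot be split into inverse pairs, so no Cayley transversal exists; this gives the ``if'' direction.

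\textbf{Self-inverse $D$, sufficiency.} Conversely, assume every self-inverse $D\neq H$ has $m$ even or contains an involution; for $D=H$ we take $T\cap H=\{1\}$. Build a multigraph $\Gamma_D$ on the $m$ left cosets in $D$. Each inverse pair $\{z,z^{-1}\}$ with $z\neq z^{-1}$ gives an edge joining $zH$ and $z^{-1}H$ (possibly a loop). Each involution gives a half-edge at its coset. Every vertex then has degree $|H|$.

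We need a ``perfect matching'' of $\Gamma_D$ in which a vertex may also be covered by a half-edge. This is the main obstacle: because of loops and half-edges, Hall's theorem does not apply directly. My first attempt would be the following.
\begin{enumerate}
\item Double all edges, making $\Gamma_D$ regular of even degree.
\item Apply Petersen's $2$-factor theorem to get a spanning union of cycles.
\item Check, via the transitive left action of $H$ on the left cosets in $D$, that the cycles can be chosen even, except possibly one odd cycle that can be closed using an involution (a half-edge) when $m$ is odd.
\end{enumerate}
Alternatively, I would adapt the Hall-type argument from the pair case to an inversion-symmetric matching, using Tutte's theorem. Here $H$-transitivity should rule out Tutte obstructions except the odd-component one, and that one is exactly ``$m$ odd and no involution''.

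\textbf{The ``in particular'' statement.} Start from such a $D=HxH$. The number $m$ of left cosets equals $|H:H\cap xHx^{-1}|$, since $H$ acts transitively on them by left multiplication with stabiliser $H\cap xHx^{-1}$. So this index is odd for every $x\in D$, and $D$ contains no involution.

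Inversion permutes the set of pairs $(z,zH)$; counting elements $z\in D$ with $z^{-1}\in zH$ modulo $2$ against $m$ odd should produce some $z\in D$ with $z^{-1}\in zH$. That gives $z^2\in H$, with $z\in G\setminus H$ because $D\neq H$. Finally, replace $z$ by a suitable odd power so that it becomes a $2$-element. This preserves $z^2\in H$ and stays in $D$, which I would justify using $D=D^{-1}$ and the fact that powers of $z$ lie in $\langle H,z\rangle$.
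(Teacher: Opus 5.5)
There is a genuine gap, and it sits at the step you flag yourself. Several pieces of your proposal are correct: the reduction to double cosets, the common left--right transversal for a pair $D\neq D^{-1}$, and the parity obstruction for a self-inverse $D\neq H$ with an odd number $m$ of left cosets and no involution. The paper only quotes this lemma from the literature, without proof, so the comparison is against what a complete proof needs. The missing piece is the converse. For a self-inverse $D\neq H$ with $m$ even, or with an involution, you never produce an inverse-closed left transversal of $H$ in $D$. Neither proposed route is carried out, and both rest on the heuristic that $H$-transitivity rules out obstructions, which is false in that generality. Petersen's theorem gives no control over the parity of the cycles in a $2$-factor, and nothing in your sketch explains why at most one odd cycle should remain. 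For the Tutte route, two disjoint triangles form a vertex-transitive graph on an even number of vertices with no perfect matching. So transitivity alone cannot exclude an obstruction with $m$ even; you need more information about $\Gamma_D$.

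That information is elementary: $aH\cap Hc\neq\emptyset$ if and only if $a\in HcH$, so inside a double coset every left coset meets every right coset. Suppose $D=D^{-1}$ and $L\neq L'$ are left cosets in $D$. Then $L'^{-1}$ is a right coset in $D$, so there is $z\in L$ with $z^{-1}\in L'$, and $z\neq z^{-1}$ automatically. Thus the loopless part of your $\Gamma_D$ is complete on its $m$ vertices, and you may pair the cosets arbitrarily. For $m$ even, choose such a pair $\{z,z^{-1}\}$ for each pair of cosets. For $m$ odd, use an involution $t\in D$ for the coset containing it and pair the remaining $m-1$ cosets. The same fact makes your bipartite multigraph for $D\neq D^{-1}$ complete bipartite, so Hall is not needed there either.

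It also repairs your ``in particular'' paragraph. There the parity count is not carried out, and in its natural form it is inconclusive. The set of $z\in D$ with $z^2\in H$ is inverse-closed and inversion has no fixed points on $D$, so that set always has even size and counting modulo $2$ says nothing. Instead, since $x\in D=D^{-1}=Hx^{-1}H$, there is $z\in xH\cap Hx^{-1}$. Then $z^{-1}\in zH$, i.e.\ $z^2\in H$, and $z\notin H$ because $D\neq H$. If $r$ is the odd part of $o(z)$, then $z^r=z(z^2)^{(r-1)/2}\in zH\subseteq D$. So $z^r$ is a $2$-element outside $H$ with $(z^r)^2\in H$. This, rather than $D=D^{-1}$ and $\langle H,z\rangle$, is the reason the odd power stays in $D$.
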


\begin{lemma}\label{lmn1.5}\cite{Zhang2022}
Let $G$ be a group and $Q$ a subgroup of $G$. Suppose that either $Q$ is a $2$-group or at least one of $|Q|$ and $|G : Q|$ is odd. Then $Q$ is a perfect code of $G$ if and only if $Q$ is a perfect code of $ N_G(Q) $. 
\end{lemma}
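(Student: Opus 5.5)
This lemma is cited from \cite{Zhang2022}. I would prove it by reducing to the double-coset criterion of Lemma~\ref{lmn2.5}.

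The direction ``perfect in $G$ implies perfect in $N_G(Q)$'' is immediate from the second lemma above (perfectness passes to any intermediate subgroup containing $Q$). For the converse, suppose $Q$ is a perfect code of $N:=N_G(Q)$ but not of $G$. Lemma~\ref{lmn2.5} then gives a self-inverse double coset $D=QxQ$ that contains no involution and consists of an odd number of left cosets of $Q$. That number is $|Q:Q\cap xQx^{-1}|$. The goal is to show that $x$ can be taken in $N$. Then $D=xQ$ is a single coset, self-inverse and involution-free, inside $N$, and applying Lemma~\ref{lmn2.5} in $N$ contradicts the hypothesis that $Q$ is perfect in $N$.

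The key step is to force $Q\cap xQx^{-1}=Q$, i.e.\ $x\in N$, in each of the three hypotheses.

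\textbf{$Q$ a $2$-group.} The index $|Q:Q\cap xQx^{-1}|$ is odd and is a power of $2$, so it equals $1$. Hence $xQx^{-1}=Q$ and $x\in N$.

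\textbf{$|Q|$ odd.} Again the index is odd. Since $D=D^{-1}$, $x^{-1}\in QxQ$. I would use that a self-inverse double coset with an odd number of cosets contains a $2$-element $y$ with $y^2\in Q$ (as in the ``in particular'' part of Lemma~\ref{lmn2.5}). Then $y^2\in Q$ has odd order, so $y^2=1$ and $y$ is an involution in $D$ (or $y=1$, forcing $D=Q$), a contradiction. So in this case there is nothing more to prove: $Q$ of odd order is always perfect.

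\textbf{$|G:Q|$ odd.} Then $Q$ contains a Sylow $2$-subgroup $P$ of $G$. The $2$-element $x$ with $x^2\in Q$ from Lemma~\ref{lmn2.5} then lies in some conjugate of $P$ inside $\langle x,Q\rangle$. I would argue via the coset action: $\langle x\rangle$ acts on the odd-size set of left cosets of $Q$ in $D$ with an odd number of fixed points. A fixed coset $zQ$ with $z\in D$ satisfies $xzQ=zQ$, so $z^{-1}xz\in Q$. Combined with $D=D^{-1}$, this should produce an element of $D\cap N$, or an involution in $D$.

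The main obstacle is this last case. There I expect to need the full counting argument from the proof of Lemma~\ref{lmn2.5}, namely the parity of involutions versus self-paired cosets, rather than a pure index computation.
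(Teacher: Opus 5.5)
The paper gives no proof of this lemma; it is quoted from \cite{Zhang2022}. Three parts of your proposal are correct:
the forward direction;
the $2$-group case, where an odd index that is a power of $2$ forces $x\in N_G(Q)$, so $D=xQ$ and Lemma~\ref{lmn2.5} applied inside $N_G(Q)$ gives the contradiction;
and the odd-order case, where the $2$-element $x\notin Q$ from the ``in particular'' clause has $x^2\in Q$ of odd order, so $x$ is an involution in $QxQ$.

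The gap is the case $|G:Q|$ odd. Your key step, that $\langle x\rangle$ acts on the left cosets of $Q$ contained in $D$, is false. Left multiplication by $x$ sends $D$ to $xQxQ$, which is in general not $D$. For instance, if $x\in N_G(Q)\setminus Q$, then $xD=x^2Q\neq xQ=D$. If you let $\langle x\rangle$ act on all of $G/Q$ instead, you do get a fixed coset $zQ$, but there is no reason for $z\in D$. Your closing sentence (``this should produce an element of $D\cap N$, or an involution in $D$'') is not an argument. You are also aiming at the wrong target: there is nothing to locate in $N_G(Q)$, because the situation of Lemma~\ref{lmn2.5} cannot occur at all. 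You can close this case in either of two ways.

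\emph{Cite the odd-index lemma from Section~2.} It makes $Q$ a perfect code of $G$ outright, so both sides of the equivalence hold.

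\emph{Argue directly.} Take a self-inverse $D=QxQ$ with $x\notin Q$, and define $gQ\sim g'Q \iff g^{-1}g'\in D$. This relation is well defined (as $QDQ=D$), symmetric (as $D=D^{-1}$), and loopless (as $D\cap Q=\emptyset$). Every vertex has exactly $r=|Q:Q\cap xQx^{-1}|$ neighbours. So this is a simple $r$-regular graph on $|G:Q|$ vertices, and $r\,|G:Q|$ is even by the handshake lemma. With $|G:Q|$ odd, $r$ is even. Hence no self-inverse double coset other than $Q$ has an odd number of left cosets, and $Q$ is perfect in $G$ by Lemma~\ref{lmn2.5}.

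With this fix, the only case with real content is the $2$-group case, which you handled correctly.
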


\begin{lemma}\cite{Zhang2022}
Let $G$ be a group and $Q$ a subgroup of $G$. Suppose that either $Q$ is a $2$-group or at least one of $|Q|$ and $|G : Q|$ is odd. Then $Q$ is a perfect code of $G$ if and only if for any $ x\in N_G(Q) $, $ x^2\in Q $ implies $(xb)^2 = 1 $ for some $ b\in Q $.
\end{lemma}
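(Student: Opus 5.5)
Since $Q$ is a $2$-group or one of $|Q|$, $|G:Q|$ is odd, Lemma \ref{lmn1.5} says that $Q$ is a perfect code of $G$ if and only if it is a perfect code of $N_G(Q)$. Inside $N := N_G(Q)$ the subgroup $Q$ is normal. So I will apply the criterion of Huang et al. for normal subgroups, stated above, with $N$ in place of $G$ and $Q$ in place of $H$. That criterion says $Q$ is a perfect code of $N$ if and only if, for every $x \in N$ with $x^2 \in Q$, there is some $b \in Q$ with $(xb)^2 = 1$. Chaining the two equivalences gives exactly the stated condition, with $x$ ranging over $N_G(Q)$. Both directions are therefore formal consequences of results already available, and no Cayley transversal has to be built by hand.

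For the reader's benefit I would also sketch why the normal-subgroup criterion holds, using Lemma \ref{lmn 2.2}. When $Q \trianglelefteq N$, every left coset $xQ$ equals the right coset $Qx$, so each double coset $QxQ$ is the single coset $xQ$. A left transversal $T$ of $Q$ in $N$ is a Cayley transversal precisely when it can be chosen with $T^{-1} = T$ and $1 \in T$. The inverse map permutes the cosets of $Q$. Cosets with $xQ \ne x^{-1}Q$ come in pairs, and we simply pick $t$ and $t^{-1}$ as representatives of such a pair. A coset with $xQ = x^{-1}Q$, i.e.\ $x^2 \in Q$, must be represented by an element equal to its own inverse, that is, an involution or the identity in $xQ$. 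Such an element exists exactly when $(xb)^2 = 1$ for some $b \in Q$.

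The only real subtlety is checking that the hypotheses of Lemma \ref{lmn1.5} are exactly the ones assumed here; they are, since the statement copies them verbatim. One should also note that the condition is vacuous for $x \in Q$ (take $b = x^{-1}$), so only $x \in N_G(Q)\setminus Q$ matters. I expect no step to be an obstacle beyond correctly matching the earlier lemmas.
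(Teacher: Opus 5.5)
Your proposal is correct and follows the intended route. The paper only cites this result from \cite{Zhang2022}, where it is obtained in the same way: the normalizer reduction (Lemma \ref{lmn1.5}) combined with Huang et al.'s criterion for normal subgroups, applied inside $N_G(Q)$, where $Q$ is normal. Your extra sketch of that criterion via inverse-closed transversals containing $1$ is also sound.
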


\begin{lemma}\cite{Zhang2021}
   Let $G$ be a group and $H$ a subgroup of $G$. If either the order of $H$ is odd or the index of $H$ in $G$ is odd, then $H$ is a perfect code of $G$.  
\end{lemma}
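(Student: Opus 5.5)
The plan is to use the double-coset criterion of Lemma~\ref{lmn2.5} together with a direct construction of a Cayley transversal in the sense of Lemma~\ref{lmn 2.2}. Recall that a Cayley transversal is a left transversal $T$ of $H$ in $G$ with $1\in T$ and $T^{-1}=T$.

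First suppose $|H|$ is odd. Take any double coset $D=HxH$ with $D=D^{-1}$ and $x\notin H$; we want to show that $D$ contains an involution. Since $D^{-1}=D$, we have $x^{-1}\in HxH$, so $x^{-1}=h_1xh_2$ for some $h_1,h_2\in H$. The map $y\mapsto y^{-1}$ permutes $D$, and it also permutes the set of left cosets in $D$ onto the set of right cosets in $D$. It is cleaner to count elements: $|D|=|H|\,|H:H\cap xHx^{-1}|$, and $D$ is a union of left cosets of $H$ whose number is $|H:H\cap xHx^{-1}|$, which divides $|H|$ and is therefore odd. Hence $|D|$ is odd, being a product of odd numbers. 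Inversion is an involutive permutation of $D$, and the identity does not lie in $D$ because $x\notin H$. So inversion has a fixed point, that is, an element $y\in D$ with $y=y^{-1}$ and $y\neq 1$, which is an involution. Thus no double coset of the forbidden kind in Lemma~\ref{lmn2.5} exists, and $H$ is a perfect code.

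Next suppose $|G:H|$ is odd. We build the transversal directly by pairing cosets. Inversion maps the set of left cosets in $D$ to the set of right cosets in $D^{-1}$. So for each self-inverse double coset $D\neq H$, we need either an involution in $D$, or an even number of left cosets in $D$. The total number of left cosets outside $H$ is $|G:H|-1$, which is even. Non-self-inverse double cosets come in pairs $D,D^{-1}$ with equal numbers of left cosets, so together they contribute an even count. It follows that the self-inverse double cosets $D\neq H$ contribute an even total number of cosets. This alone does not force each of them to be even, so this counting is only a first step and the argument still needs another idea. The cleaner route is the other half: a Sylow $2$-subgroup $P$ of $H$ is then a Sylow $2$-subgroup of $G$. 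If $P\ne 1$, take any $x\in N_G(P)$ with $x^2\in P$. Since $P$ is Sylow in $G$, the group $\langle P,x\rangle$ is a $2$-group containing $P$, so $x\in P$, and then $b=x^{-1}$ gives $(xb)^2=1$. By the criterion in the lemma following Lemma~\ref{lmn1.5}, applied to the $2$-group $P$, it follows that $P$ is a perfect code of $G$. By Zhou's theorem cited in the introduction, $H$ is a perfect code of $G$ if and only if a Sylow $2$-subgroup of $H$ is, so $H$ is a perfect code. If $P=1$, then $|H|$ is odd and the first case applies.

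The main obstacle is the index-odd case. The naive pairing-of-cosets argument does not control individual double cosets, so I would rely on the Sylow reduction together with the normalizer criterion instead. The only thing to check carefully there is that $x\in N_G(P)$ with $x^2\in P$ forces $x\in P$. This holds because $P\langle x\rangle$ is then a subgroup of order $|P|\cdot|\langle x\rangle:\langle x\rangle\cap P|$, which is a power of $2$, and $P$ is a Sylow $2$-subgroup of $G$.
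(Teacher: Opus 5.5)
Your proof is correct, but note that the paper gives no proof of this statement; it simply imports it from \cite{Zhang2021}. So you are deriving it from later criteria that the paper also quotes, not reproducing an argument from the paper.

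Your odd-order half is the standard self-contained argument. You are right to read Lemma~\ref{lmn2.5} with $x\notin H$: taken literally with $D=H$, that lemma would declare every odd-order subgroup non-perfect.

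Your odd-index half is valid, but it rests on the normalizer criterion stated after Lemma~\ref{lmn1.5} and on Zhou's Sylow reduction (Lemma~\ref{lm1}). These are much deeper results than the one being proved, and their original proofs may themselves use it. That half is therefore a deduction from black boxes rather than an independent proof.

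If you accept the normalizer criterion anyway, it applies directly to $Q=H$, because its hypothesis explicitly allows $|Q|$ or $|G:Q|$ to be odd:
\begin{itemize}
\item If $|G:H|$ is odd, then $N_G(H)/H$ has odd order, so $x^2\in H$ forces $x\in H$, and $b=x^{-1}$ works.
\item If $|H|$ is odd and $x\notin H$, then $H\langle x\rangle=H\cup xH$ has order $2|H|$. Any involution in it lies in $xH$, which gives the required $b$.
\end{itemize}

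Alternatively, the idea your abandoned counting paragraph was missing is already inside Lemma~\ref{lmn2.5}. Its ``in particular'' clause yields $x\notin H$ with $x^2\in H$ and $|H:K|$ odd, where $K=H\cap xHx^{-1}$. Since $x^2\in H$, you get $x^2\in K$ and $xKx^{-1}=xHx^{-1}\cap H=K$. Hence $K\cup xK$ is a subgroup containing $K$ with index $2$. This contradicts the oddness of $|G:K|=|G:H|\,|H:K|$.

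In the final write-up, drop the inconclusive coset-counting paragraph.
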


\begin{lemma}\cite{Zhang2021}
   Let $G$ be a group and $H$ a subgroup of $G$. If there exists a Sylow $2$-subgroup of $H$, which is a perfect code of $G$, then $H$ is a perfect code of $G$. 
\end{lemma}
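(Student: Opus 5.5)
We prove the last lemma: if some Sylow $2$-subgroup $P$ of $H$ is a perfect code of $G$, then $H$ is a perfect code of $G$. The plan is to use the double coset criterion of Lemma \ref{lmn2.5} and argue by contradiction.

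Suppose $H$ is not a perfect code of $G$. Lemma \ref{lmn2.5} then gives a double coset $D = HxH$ with three properties: $D = D^{-1}$, $D$ contains no involution, and $D$ is a union of an odd number of left cosets of $H$. The goal is to produce from this a double coset $PyP$ that violates the same criterion for $P$.

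First we decompose $D$ into $(P,P)$-double cosets. Since $D$ is a union of left $H$-cosets, and each left $H$-coset is a union of $|H:P|$ left $P$-cosets, $D$ contains $m\,|H:P|$ left $P$-cosets, where $m$ is the number of left $H$-cosets in $D$. Both $m$ and $|H:P|$ are odd, so $D$ contains an odd number of left $P$-cosets. Also $PDP = D$, so $D$ is a disjoint union of double cosets $PyP$, each contained in $D$.

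Next we use the inversion map. It permutes the $(P,P)$-double cosets inside $D$ because $D = D^{-1}$, and it preserves the number of left $P$-cosets in each. This holds because $|PyP|/|P|$ equals $|P y P y^{-1}|/|P| = |P : P\cap yPy^{-1}|$, while the inverse $Py^{-1}P$ has size $|Py^{-1}P|=|PyP|$. The double cosets that are not self-inverse come in pairs with equal counts. Since the total count is odd, at least one self-inverse $PyP \subseteq D$ contains an odd number of left $P$-cosets.

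Finally, $PyP \subseteq D$ contains no involution, because $D$ has none. Lemma \ref{lmn2.5}, applied to the subgroup $P$, then says $P$ is not a perfect code of $G$, contradicting the hypothesis. So $H$ is a perfect code of $G$.

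The only delicate step is this parity bookkeeping: the number of left cosets in $D$ multiplies by the index $|H:P|$, and the non-self-inverse double cosets pair off with equal sizes. Everything else is a direct application of Lemma \ref{lmn2.5}.
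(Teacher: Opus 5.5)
Your proof is correct. The paper gives no argument for this lemma (it is quoted from the literature), so there is no internal proof to compare with. What you do is derive it from the double-coset criterion, Lemma~\ref{lmn2.5}, by a parity count, and each step holds:
$D$ is a union of $m\,|H:P|$ left $P$-cosets with both factors odd;
$PDP=D$;
inversion permutes the $(P,P)$-double cosets inside $D$ and preserves their sizes.
For the last point, the equality $|Py^{-1}P|=|PyP|$ alone suffices and the index formula is not needed. Hence the non-self-inverse double cosets pair off, and some self-inverse $PyP\subseteq D$ has an odd number of left $P$-cosets and no involution.

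Two small points. First, Lemma~\ref{lmn2.5} has to be read with $D\neq H$, i.e.\ $x\notin H$ as in its \emph{in particular} clause. Read literally with $D=H$ allowed, it would make every subgroup of odd order non-perfect. Your construction is compatible with this reading, since $D\cap H=\emptyset$ and $P\subseteq H$ give $PyP\cap P=\emptyset$.

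Second, on the $P$ side you only use the easy direction of the criterion, which you could verify inline. If $T$ were a Cayley transversal of $P$, then $T\cap PyP$ would be inverse-closed of odd size, hence contain some $t=t^{-1}$. Since $t\in D$ and $t\neq 1$, this $t$ would be an involution in $D$, a contradiction.

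So the whole weight of your argument rests on the nontrivial direction of Lemma~\ref{lmn2.5} applied to $H$. This makes the proof very short, at the price of deducing the statement from a later and stronger criterion rather than directly from the Cayley-transversal characterization. That is legitimate here, since the criterion is available as a stated lemma.
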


\begin{lemma}\label{lm1}\cite{Zhang2023}
  Let $G$ be a finite group and $H$ a subgroup of $G$. Then the following statements are equivalent:
\begin{itemize}
    \item every Sylow $2$-subgroup of $H$ is a perfect code of $G$;
    \item $H$ has a Sylow $2$-subgroup which is a perfect code of $G$;
    \item $H$ is a perfect code of $G$. 
\end{itemize}
\end{lemma}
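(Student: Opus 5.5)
We prove Lemma~\ref{lm1}, the equivalence of (i) every Sylow $2$-subgroup of $H$ is a perfect code of $G$, (ii) some Sylow $2$-subgroup of $H$ is, and (iii) $H$ is. The plan is to close the cycle (i)$\Rightarrow$(ii)$\Rightarrow$(iii)$\Rightarrow$(i).

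The implication (i)$\Rightarrow$(ii) is trivial, since Sylow $2$-subgroups of $H$ exist. The implication (ii)$\Rightarrow$(iii) is exactly the lemma from \cite{Zhang2021} stated just before. It remains to show (iii)$\Rightarrow$(i).

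For (iii)$\Rightarrow$(i), all Sylow $2$-subgroups of $H$ are conjugate by elements of $H\le G$. By the conjugation lemma, which says $g^{-1}Qg$ is a perfect code whenever $Q$ is, it therefore suffices to show that one Sylow $2$-subgroup $Q$ of $H$ is a perfect code of $G$. We argue by contradiction using Lemma~\ref{lmn2.5}.

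Suppose $Q$ is not a perfect code. Since $Q$ is a $2$-group, Lemma~\ref{lmn2.5} provides a double coset $D=QxQ$ with $D=D^{-1}$, containing no involution and consisting of an odd number of left cosets of $Q$. The aim is to manufacture a bad double coset for $H$, so that Lemma~\ref{lmn2.5} contradicts (iii). The natural candidate is $HxH$: it satisfies $HxH=(HxH)^{-1}$ because $x^{-1}\in QxQ\subseteq HxH$.

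Two things must then be checked.

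\emph{Odd number of cosets.} We need $HxH$ to contain an odd number of left cosets of $H$, that is, $|H:H\cap xHx^{-1}|$ odd. Since $|Q:Q\cap xQx^{-1}|$ is odd and $x^2\in Q$ may be arranged, the intended route is to show that $Q\cap xQx^{-1}$ is contained in a Sylow $2$-subgroup of $H\cap xHx^{-1}$ of full $2$-part. This will likely require replacing $x$ within $QxQ$ suitably. A cleaner alternative is to count $2$-parts: write $|HxH|/|H|=|H:H\cap H^{x^{-1}}|$ and decompose $HxH$ into $(Q,H)$-double cosets.

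\emph{No involution.} We need $HxH$ to contain no involution. This is the main obstacle, since $HxH$ is much bigger than $QxQ$.

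For the involution step, the first attempt is to use the criterion's own flexibility rather than $HxH$ directly. Consider the $H$-double cosets that are inverse-closed with an odd number of $H$-cosets, and partition each into $Q$-double cosets $QyH$. The inversion map pairs these up, which gives a parity argument. If every self-inverse $H$-double coset with odd size contained an involution, then the self-inverse $Q$-double cosets of odd length inside it could be matched by involutions: each involution $t$ gives $QtQ$ self-inverse with an involution. Counting self-inverse odd-length $Q$-double cosets modulo $2$ inside $HxH$ would then force $QxQ$ to contain an involution, a contradiction. Making this parity count rigorous, via an orbit-counting argument for $Q$ acting on the $H$-cosets in $HxH$ combined with the fixed points of the inversion involution, is where the real work lies.

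Failing that, the fallback is to reduce via Lemma~\ref{lmn1.5} to $N_G(Q)$. There the criterion becomes: for every $x\in N_G(Q)$ with $x^2\in Q$, some $(xb)^2=1$ with $b\in Q$. One would then try to transfer a Cayley transversal of $H$ (Lemma~\ref{lmn 2.2}) to one of $Q$ by intersecting with $Q$-cosets inside each $H$-coset.
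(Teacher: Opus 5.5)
The paper gives no proof of this lemma; it quotes it from \cite{Zhang2023}. Your (i)$\Rightarrow$(ii), your (ii)$\Rightarrow$(iii) via the lemma of \cite{Zhang2021}, and your reduction of (iii)$\Rightarrow$(i) to a single Sylow $2$-subgroup $Q$ by conjugation are all fine. But (iii)$\Rightarrow$(i) is the whole content of the lemma, and there you leave both checks unproved. The mechanism you propose for the involution step also cannot work. A count modulo $2$ of the self-inverse odd-length $Q$-double cosets inside $HxH$ can at best show that there are an odd number of them, or that some of them contain an involution. It cannot single out the particular coset $QxQ$. Worse, the involutions that perfectness of $H$ supplies in $HxH$ need not lie in odd-length $Q$-double cosets at all, since $QtQ$ has odd length only when $t\in N_G(Q)$. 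The fallback fails too: a Cayley transversal of $H$ meets each left $H$-coset once, so intersecting it with $Q$-cosets cannot give one element in each of the $|H:Q|$ left $Q$-cosets inside an $H$-coset.

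Here is what is missing. Since $Q$ is a $2$-group, an odd number $|Q:Q\cap xQx^{-1}|$ of left cosets forces $x\in N_G(Q)$, $QxQ=xQ$ and $x^2\in Q$. Then $Q\le H\cap xHx^{-1}$, so $HxH$ is self-inverse with an odd number of left $H$-cosets, which makes your first check immediate. Perfectness of $H$ then gives an involution $t\in HxH$. Moving $t$ into $xQ$ takes three group-theoretic steps, not counting.

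(a) $t$ normalizes $K=H\cap tHt$, which has odd index in $H$. A Sylow $2$-subgroup of $K\langle t\rangle$ containing $t$ meets $K$ in a Sylow $2$-subgroup $hQh^{-1}$ of $H$ that $t$ normalizes. Replacing $t$ by $h^{-1}th\in HxH$, you may assume $t\in N_G(Q)$.

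(b) A Frattini-type argument gives $HxH\cap N_G(Q)=N_H(Q)\,x\,N_H(Q)$. If $h_1xh_2\in N_G(Q)$, then $Q$ and $h_2Qh_2^{-1}$ are both Sylow in $H\cap x^{-1}Hx$. So $h_2=kn$ with $k\in H\cap x^{-1}Hx$ and $n\in N_H(Q)$, and then $h_1xkx^{-1}\in N_H(Q)$.

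(c) In $N_G(Q)/Q$ the image $\bar M$ of $N_H(Q)$ has odd order. If $\bar x\neq 1$ (otherwise there is nothing to prove), every involution of $\bar M\bar x\bar M$ is $\bar M$-conjugate to $\bar x$. To see this, conjugate it to some $\bar m\bar x$, note that $\bar x$ inverts $\bar m$, write $\bar m=\bar k^2$ with $\bar k\in\langle\bar m\rangle$, and check that $\bar m\bar x=\bar k\bar x\bar k^{-1}$. Hence $tQ=g\,xQ\,g^{-1}$ for some $g\in N_H(Q)$, so $g^{-1}tg$ is an involution in $xQ$, which is the contradiction you need.

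Your first instinct, that $HxH$ contains no involution, is in fact correct, but steps (a)--(c) are exactly what proves it.
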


\begin{lemma}\cite{Zhang2023}
  Let $G$ be a finite group and $H$ a subgroup of $G$. Let $Q$ be a Sylow $2$-subgroup of $H$ and $P$ a Sylow $2$-subgroup of $ N_G(Q) $. Then $H$ is a perfect code of $G$ if and only if $Q$ is a perfect code of $P$.  
\end{lemma}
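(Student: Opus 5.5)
The plan is to chain together the reductions already recorded above, with the subgroup $H$ replaced in turn by $Q$, then by $N_G(Q)$, and finally by $P$. Write $N:=N_G(Q)$.

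\emph{Step 1 (from $H$ to $Q$).} By Lemma~\ref{lm1}, $H$ is a perfect code of $G$ if and only if its Sylow $2$-subgroup $Q$ is a perfect code of $G$.

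\emph{Step 2 (from $G$ to $N$).} Since $Q$ is a $2$-group, Lemma~\ref{lmn1.5} shows that $Q$ is a perfect code of $G$ if and only if $Q$ is a perfect code of $N$. Combining the two steps, it remains to prove that $Q$ is a perfect code of $N$ if and only if $Q$ is a perfect code of $P$.

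\emph{Step 3 ($Q\le P$).} Since $Q\trianglelefteq N$ is a normal $2$-subgroup, $Q$ lies in every Sylow $2$-subgroup of $N$. In particular $Q\le P$ and $Q\trianglelefteq P$. One could now simply quote the lemma of \cite{Zhang2021} that a subgroup is a perfect code of $G$ if and only if it is a perfect code of every intermediate subgroup. To keep the argument self-contained, I would instead use the normal-subgroup criterion of \cite{Huang2018}. It applies in both $N$ and $P$, since $Q$ is normal in each.

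\emph{Step 4 (the implication $N\Rightarrow P$).} This direction is immediate. Given $x\in P$ with $x^2\in Q$, the criterion in $N$ yields $h\in Q$ with $(xh)^2=1$, which is exactly the criterion in $P$.

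\emph{Step 5 (the implication $P\Rightarrow N$).} This is the substantive direction, and I expect it to be the main obstacle.
\begin{itemize}
\item Take $x\in N$ with $x^2\in Q$. Then $xQ$ has order at most $2$ in $N/Q$, so $\langle x,Q\rangle$ is a $2$-subgroup of $N$.
\item By Sylow's theorem, $\langle x,Q\rangle\le nPn^{-1}$ for some $n\in N$.
\item Conjugation by $n$ normalizes $Q$ and maps $P$ isomorphically onto $nPn^{-1}$. Hence the criterion for $Q$ in $P$ transfers to the criterion for $Q=nQn^{-1}$ in $nPn^{-1}$.
\item Concretely, $y:=n^{-1}xn\in P$ satisfies $y^2\in Q$. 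The criterion in $P$ gives $b\in Q$ with $(yb)^2=1$. Setting $h:=nbn^{-1}\in Q$, we get $(xh)^2=n(yb)^2n^{-1}=1$.
\end{itemize}
This verifies the criterion in $N$, so $Q$ is a perfect code of $N$. Combined with Steps 1 and 2, this completes the equivalence.
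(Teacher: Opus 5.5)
The paper gives no proof of this lemma. It is imported from \cite{Zhang2023}, so there is no in-paper argument to compare yours against. Judged on its own, your proof is correct.

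Steps 1 and 2 reduce the claim, via Lemma~\ref{lm1} and Lemma~\ref{lmn1.5}, to the equivalence ``$Q$ is a perfect code of $N$ iff $Q$ is a perfect code of $P$''. Because $Q$ is normal in both $N$ and $P$, the criterion of \cite{Huang2018} is available on each side. Step 4 is immediate.

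Step 5 supplies the one new ingredient. Any $x\in N$ with $x^2\in Q$ is a $2$-element, since $\langle x,Q\rangle$ is a $2$-group. So $x$ is conjugated into $P$ by some $n\in N$. Since $n$ normalizes $Q$, the witness $b\in Q$ found for $n^{-1}xn$ in $P$ transports back to $h=nbn^{-1}\in Q$. Your route thus shows the lemma is a formal consequence of the other imported results (Lemma~\ref{lm1}, Lemma~\ref{lmn1.5} and the normal-subgroup criterion) together with Sylow's theorem.

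One correction to your aside in Step 3: quoting the intermediate-subgroup lemma of \cite{Zhang2021} would not ``simply'' finish the argument. That lemma says $Q$ is perfect in $N$ iff it is perfect in \emph{every} subgroup between $Q$ and $N$. It therefore yields only the implication from $N$ to $P$, which is your Step 4. Perfectness in the single intermediate subgroup $P$ does not give perfectness in $N$ without the conjugation argument of Step 5. Since you did not actually rely on that lemma, the proof stands.
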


\noindent It follows from the Lemma \ref{lm1} that any subgroup of odd order is perfect subgroup code in $ S_n.$ Therefore, we only focus on $2-$subgroups in $S_n$.

\section{Cyclic subgroup codes in $ S_n $ }
In this section, we provide a comprehensive classification of cyclic perfect codes in the symmetric group 
$ S_n $. By examining the generator of a cyclic subgroup, we can determine if the subgroup forms a perfect subgroup code or not. We present some constructions for such codes. We begin with an obvious result. 
\begin{theorem}
Suppose $ H = \langle (x_1x_2)(x_3x_4)\cdots(x_{m-1}x_m) \rangle$ is a subgroup of $A_n$. If all the transpositions are disjoint, then $ H $ is not perfect in $S_n$.
\end{theorem}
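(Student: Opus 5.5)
The plan is to apply the criterion of \cite{Zhang2022} stated above for $2$-groups: a $2$-subgroup $Q$ of $G$ is a perfect code of $G$ if and only if every $x\in N_G(Q)$ with $x^2\in Q$ satisfies $(xb)^2=1$ for some $b\in Q$. Write $a=(x_1x_2)(x_3x_4)\cdots(x_{m-1}x_m)$ and $H=\langle a\rangle=\{1,a\}$. Then $H$ is a $2$-group of order $2$, so this criterion applies with $G=S_n$ and $Q=H$. It is therefore enough to produce one $x\in N_{S_n}(H)$ with $x^2\in H$ such that neither $x^2$ nor $(xa)^2$ equals $1$.

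First I use the hypothesis $H\le A_n$. The number $k=m/2$ of disjoint transpositions in $a$ must be even, so the transpositions can be grouped into $k/2$ disjoint pairs. For a single pair, the direct check $(x_1x_3x_2x_4)^2=(x_1x_2)(x_3x_4)$ shows that a product of two disjoint transpositions is the square of a $4$-cycle on the same four points. Accordingly I set
\[
x=(x_1x_3x_2x_4)(x_5x_7x_6x_8)\cdots(x_{m-3}x_{m-1}x_{m-2}x_m).
\]
This $x$ is a product of $k/2$ disjoint $4$-cycles, and since disjoint cycles commute, $x^2=a$.

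Next I verify the conditions. Because $a=x^2$, the element $x$ commutes with $a$, so $x\in N_{S_n}(H)$. Also $x^2=a\in H$. The only elements of $H$ are $b=1$ and $b=a$. For $b=1$ we get $(xb)^2=x^2=a\neq 1$. For $b=a$ we have $xa=x^3$, hence $(xa)^2=x^6=x^2=a\neq1$, using $x^4=1$. So no $b\in H$ makes $xb$ an involution or the identity, and the criterion shows that $H$ is not a perfect code of $S_n$.

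There is no real obstacle here. The only point needing care is constructing the square root $x$, which needs the evenness of $k$; that is exactly where the assumption $H\le A_n$ enters. As a cross-check, one could instead apply Lemma \ref{lmn2.5} to the double coset $HxH=\{x,x^3\}$. It equals its own inverse, contains exactly one left coset of $H$ (an odd number), and contains no involution, since both of its elements have order $4$.
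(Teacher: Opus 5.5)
Your proof is correct and is essentially the paper's argument: the paper's witness $t=(x_1x_4x_2x_3)(x_5x_8x_6x_7)\cdots(x_{m-3}x_mx_{m-2}x_{m-1})$ is exactly your $x^{-1}$. The paper concludes via Lemma \ref{lmn2.5} from the fact that $tH=\{t,t^{-1}\}$ is inverse-closed and contains no involution, which is precisely your cross-check. Your main route through the normalizer criterion of \cite{Zhang2022} is an equivalent repackaging, and it has the merit of stating explicitly what the paper leaves implicit: that $H\le A_n$ forces $m/2$ to be even, that $x^2=a$, and that $HxH=xH$.
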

\begin{proof}
We have $ H = \{(1), (x_1x_2)(x_3x_4)\cdots(x_{m-1}x_m)\}.$ Let $ t = (x_1x_4x_2x_3)(x_5x_8x_6x_7)\cdots (x_{m-3}x_m x_{m-2}x_{m-1}).$ Now, $tH = \{t, t^{-1}\} = (tH)^{-1}.$ It is clear that $tH$ does not contain an involution. Hence, $ H $ is not perfect code in $S_n.$    
\end{proof}

\begin{theorem}
Suppose $ H = \langle(x_1x_2)(x_3x_4)\cdots(x_{m-1}x_m)\rangle$ is a subgroup of $S_n$ such that $\dfrac{m}{2}$ is odd. If all the transpositions are disjoint, then $ H $ is perfect in $S_n$.
\end{theorem}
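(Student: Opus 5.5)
Write $a=(x_1x_2)(x_3x_4)\cdots(x_{m-1}x_m)$ and $H=\langle a\rangle=\{1,a\}$. Since $|H|=2$ is a $2$-group, Lemma \ref{lmn1.5} (or the criterion stated right after it) reduces the question to the normalizer: $H$ is perfect in $S_n$ if and only if for every $x\in N_{S_n}(H)$ with $x^2\in H$ there is $b\in H$ with $(xb)^2=1$. Because $H$ has order $2$, $N_{S_n}(H)=C_{S_n}(a)$. So the plan is to take an arbitrary $x$ commuting with $a$ with $x^2\in\{1,a\}$ and produce the required $b$.

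If $x^2=1$, take $b=1$. The only real case is $x^2=a$, and here I show it \emph{cannot occur} when $m/2$ is odd. Then the condition is vacuous and $H$ is perfect. The claim is that $a$ has no square root in $S_n$.

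For this, look at the cycle type of $x^2$ for an arbitrary $x\in S_n$. Squaring a cycle of odd length $\ell$ gives a cycle of length $\ell$. Squaring a cycle of even length $2k$ gives two disjoint cycles of length $k$. So the $2$-cycles of $x^2$ arise only from $4$-cycles of $x$, and each $4$-cycle contributes exactly two transpositions. Hence the number of $2$-cycles in any square is even. Since $a$ has exactly $m/2$ disjoint transpositions and $m/2$ is odd, $a\neq x^2$ for every $x\in S_n$.

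The only step needing care is this square-root obstruction. It is a routine cycle-type count: odd cycles stay odd, and a $2k$-cycle splits into two $k$-cycles, which are $2$-cycles only when $k=2$. Alternatively, one could argue directly with Lemma \ref{lmn2.5}. That lemma says a non-perfect $H$ would force a $2$-element $x\notin H$ with $x^2\in H$ and $HxH$ self-inverse without involutions. Such an $x$ must satisfy $x^2=a$, which is again impossible by the same count. The normalizer criterion route is the cleaner one, so I would present that.
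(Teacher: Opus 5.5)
Your proof is correct, but it takes a different route from the paper's.

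\textbf{The paper's route.} The paper never looks at square roots. Since $m/2$ is odd, $a$ is an odd permutation, so every left coset $gH=\{g,ga\}$ contains exactly one odd permutation. Hence the set $S$ of odd permutations other than $a$, together with $1$, is an inverse-closed left transversal of $H$, and Lemma \ref{lmn 2.2} finishes. This uses only the transversal characterization. It is also constructive: it exhibits the connection set $S$, i.e.\ an explicit Cayley graph $\mathrm{Cay}(S_n,S)$ in which $H$ is a perfect code.

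\textbf{Your route.} You invoke the heavier criterion for $2$-subgroups from \cite{Zhang2022} (the lemma following Lemma \ref{lmn1.5}). You then reduce everything to the nonexistence of a square root of $a$. That is exactly the mechanism the paper uses later in Theorems \ref{thm 3} and \ref{Cyc2}; indeed the present statement is the order-$2$ case of Theorem \ref{thm 3}. Your argument is non-constructive, but it fits that later framework uniformly.

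\textbf{A simplification.} Your cycle-type count is valid, but here it is more than you need. For an involution, ``odd number of transpositions'' is the same as ``odd permutation''. So the obstruction also follows in one line from $\mathrm{sgn}(x^2)=\mathrm{sgn}(x)^2=1$, i.e.\ Lemma \ref{lmn 3.1}. The finer count only pays off for non-involutions such as $(1\,2)(3\,4\,5\,6)$, which is even yet not a square. Also, passing to $N_{S_n}(H)=C_{S_n}(a)$ is harmless but unnecessary, since you show $a$ has no square root anywhere in $S_n$.
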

\begin{proof}
Let us define a set $ S = \{ x\in S_n\setminus A_n : x\notin H \}$. It follows from the definition of $H$ that $ xH\cap yH =  \emptyset$ for $ x\neq y \in S $. Since $|H| = 2 $ then $|S| = \frac{n!}{2}-1.$ Also $ S = S^{-1}.$ Consequently, $ S_n = {\cup_{x\in S}} xH\cup H .$ Hence, $S$ is Cayley transversal of $H$ in $S_n$. It follows from Lemma \ref{lmn 2.2} that $ H $ is perfect in $S_n$.
\end{proof}
\noindent To establish our forthcoming outcome, we'll employ the subsequent lemma.
\begin{lemma}\label{lmn 3.1}
Let $ G^2 = \{ g^2 : g\in S_n \}$. Then $G^2\subset A_n$ for $n\geq 2.$
\end{lemma}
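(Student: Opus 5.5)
The plan is to use the sign homomorphism. Let $\operatorname{sgn}\colon S_n \to \{1,-1\}$ be the map sending each permutation to its sign. It is a group homomorphism, and its kernel is exactly $A_n$. This holds for every $n \geq 2$, since $A_n$ is defined as the set of even permutations. For $n = 1$ the statement is trivial anyway, because $S_1 = A_1$.

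The argument runs as follows. Take an arbitrary element $g^2 \in G^2$ with $g \in S_n$. Because $\operatorname{sgn}$ is multiplicative,
\[
\operatorname{sgn}(g^2) = \operatorname{sgn}(g)\operatorname{sgn}(g) = \operatorname{sgn}(g)^2 = 1,
\]
since $\operatorname{sgn}(g) \in \{1,-1\}$. Hence $g^2 \in \ker(\operatorname{sgn}) = A_n$, which gives $G^2 \subseteq A_n$.

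As an alternative, one can argue directly from cycle structure without the homomorphism. Decompose $g$ into disjoint cycles. The square of a cycle of odd length $k$ is again a $k$-cycle, which is even. The square of a cycle of even length $2r$ splits into two disjoint $r$-cycles, whose combined parity is $2(r-1)$ transpositions, which is even. Either route works; I would use the sign argument because it is one line.

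There is no real obstacle here. The only point to handle is the inclusion symbol. If the paper means proper inclusion, I would add a remark for $n \geq 3$ that a $3$-cycle $c$ satisfies $c = (c^2)^2$, so $3$-cycles lie in $G^2$. I would also note that the inclusion is generally proper for larger $n$: for example, in $S_4$ the element $(12)(34)$ is a square, since it equals $(1324)^2$, and it does not witness properness, but elements such as a product of a $2$-cycle pair structure in $A_n$ not of square type can. The lemma as used later only needs $G^2 \subseteq A_n$, so I would prove that and read $\subset$ as inclusion.
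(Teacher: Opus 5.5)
Your sign-homomorphism proof is correct. Reading $\subset$ as $\subseteq$ is not just convenient but forced. For $2\le n\le 5$ one has $G^2=A_n$: a $3$-cycle or $5$-cycle is the square of one of its own powers, and $(1\,2)(3\,4)=(1\,3\,2\,4)^2$. Strict inclusion only begins at $n=6$, witnessed by $(1\,2\,3\,4)(5\,6)\in A_6$, which is not a square. Your remarks about $3$-cycles and the $S_4$ example have nothing to do with properness and can be dropped.

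The paper takes the route of your ``alternative''. Squaring an odd $k$-cycle gives a $k$-cycle, and squaring an even $k$-cycle gives two $k/2$-cycles. Hence a permutation is a square if and only if each even cycle length occurs an even number of times, and such permutations are even. Your one-line argument is the cleaner proof of the lemma as stated. The cycle-type route, however, yields a characterization of squares in $S_n$, and the paper actually uses that stronger content. In Theorem~\ref{Cyc2} the generator $x$ already lies in $A_n$, so mere membership in $A_n$ says nothing. There the lemma is invoked to conclude that $x$ has no square root because some even cycle length occurs an odd number of times in $x$, which the sign argument cannot deliver.

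So your claim that later applications need only $G^2\subseteq A_n$ does not hold for this paper. To support them, you would have to add the observation that, for even $m$, the $m$-cycles of $g^2$ arise in pairs from the $2m$-cycles of $g$.
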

\begin{proof}
It is clear that the square of a $k$-cycle is again a $k$-cycle when $k$ is odd and is the product of two $k/2$ cycles when $k$ is even. It follows that an element $x\in S_n$ is a perfect square if and only if, for every even value of $k$, the cycle structure for $x$ has an even number of $k$-cycles.
\end{proof} 

\begin{theorem}\label{thm 3}
Let $ H = \langle x \rangle $ be a cyclic group of order $ 2^m $ of $ S_n $  such that $ x\in S_n\setminus A_n $, $ m\in\mathbb{N}$. Then $ H $ is perfect in $ S_n $.    
\end{theorem}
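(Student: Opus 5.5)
Since $x$ is odd and $|H|=2^m$, I would argue by contradiction with the non-perfectness criterion Lemma \ref{lmn2.5}, using Lemma \ref{lmn 3.1} to control squares. Suppose $H$ is not a perfect code of $S_n$. By Lemma \ref{lmn2.5} there is a $2$-element $y\in S_n\setminus H$ with $y^2\in H$ such that the double coset $HyH$ contains no involution.

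The key observation is that $y^2$ is a square, so Lemma \ref{lmn 3.1} gives $y^2\in A_n$. Hence $y^2\in H\cap A_n=\langle x^2\rangle$: since $x$ is odd, $x^k$ is even exactly when $k$ is even. Write $y^2=x^{2k}$. Now I look for an involution, or the identity, in the coset $yH\subseteq HyH$. The natural candidate is $z=yx^{-k}$; if $y$ commuted with $x$ we would get $z^2=y^2x^{-2k}=1$. Since $y\notin H$, $z\neq 1$, so $z$ would be an involution in $HyH$, a contradiction.

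The main obstacle is that $y$ need not commute with $x$, or even normalize $H$. To handle this I would first reduce to the normalizer. Because $H$ is a $2$-group, Lemma \ref{lmn1.5} says it suffices to show that $H$ is a perfect code of $N_{S_n}(H)$. By the criterion in the lemma following Lemma \ref{lmn1.5}, it is then enough to prove the following: for every $y\in N_{S_n}(H)$ with $y^2\in H$, there is $b\in H$ with $(yb)^2=1$.

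For such $y$, conjugation by $y$ acts on the cyclic group $H$ as $x\mapsto x^r$ for some odd $r$. As above, $y^2=x^{2k}$. Then $(yx^j)^2=y^2\,(y^{-1}x^jy)\,x^j=x^{2k+j(r+1)}$, so I need $j$ with $2k+j(r+1)\equiv 0 \pmod{2^m}$. Since $r+1$ is even, this congruence is solvable when $\gcd(r+1,2^m)$ divides $2k$. Applying this reduction to Lemma \ref{lmn 3.1} is where the real work lies, and I expect it to need a further argument.

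Parity helps but does not finish. Since $y^2$ commutes with $y$, we have $x^{2kr}=x^{2k}$, so $2k(r-1)\equiv 0\pmod{2^m}$. If $r\equiv 1\pmod 4$, then $\gcd(r+1,2^m)=2$ and $j$ always exists. The difficult case is $r\equiv 3\pmod 4$, where $\gcd(r+1,2^m)$ can be as large as $2^m$. There I would try to exploit the cycle structure of $x$: $x$ has a $2^m$-cycle and an odd number of cycles of length $2^m$, which I would derive using that $x$ is odd and $o(x)=2^m$. The aim is to show that if $y^2=x^{2k}$ with $k$ odd and $y$ inverting $x$ up to sign, then the square-root criterion from the proof of Lemma \ref{lmn 3.1} fails. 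Specifically, the resulting cycle type of $x^{2k}$ would have an odd number of cycles of some even length, so $x^{2k}$ could not be $y^2$. This cycle-count step is the one I regard as the crux.
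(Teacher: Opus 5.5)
The gap is exactly the step you flag as the crux, and it cannot be filled, because the statement itself is false for $m\ge 2$.

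Your reduction to $N_{S_n}(H)$ and the formula $(yx^j)^2=x^{2k+j(r+1)}$ are correct, and they settle more than you say. If $r\equiv 3\pmod 4$, then $r-1$ is twice an odd number, so $2k(r-1)\equiv 0\pmod{2^m}$ forces $4k\equiv 0\pmod{2^m}$, i.e.\ $y^2\in\{1,x^{2^{m-1}}\}$. Your congruence is then solvable unless $r\equiv -1\pmod{2^m}$ and $y^2=x^{2^{m-1}}\neq 1$. So the only possible obstruction is an element $y$ with $y^{-1}xy=x^{-1}$ and $y^2=x^{2^{m-1}}$, $m\ge 2$, i.e.\ a generalized quaternion overgroup of $H$. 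Your plan for this case fails for two reasons. First, the claim that an odd $x$ of order $2^m$ has an odd number of $2^m$-cycles is false: oddness only says that the \emph{total} number of even-length cycles is odd. Second, if $x$ has $c$ cycles of length $2^m$, then $x^{2^{m-1}}$ is a product of $c\cdot 2^{m-1}$ disjoint transpositions. That is an even number when $m\ge 2$, so $x^{2^{m-1}}$ is always a square in $S_n$, and Lemma \ref{lmn 3.1} gives no information.

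The bad configuration actually occurs. Let $Q_8$ act on itself by left multiplication and on the two cosets of $\langle j\rangle$. In $S_{10}$ the images of $i$ and $j$ are $x=(1\,2\,3\,4)(5\,6\,7\,8)(9\,10)$ and $y=(1\,5\,3\,7)(2\,8\,4\,6)$. Here $x$ is odd of order $4$, $y^{-1}xy=x^{-1}$, and $y^2=x^2$. Thus $y\in N_{S_{10}}(H)\setminus H$ and $y^{-1}=yx^2\in yH$. Moreover $(yx^j)^2=y^2=x^2\neq 1$ for every $j$. So $HyH=yH$ is an inverse-closed double coset consisting of one left coset and containing no involution. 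By Lemma \ref{lmn2.5}, $H=\langle x\rangle$ is not a perfect code of $S_{10}$. The same construction with $Q_{2^{m+1}}$ (regular action plus the action on the cosets of $\langle x^2,y\rangle$) gives a counterexample for every $m\ge 2$.

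The paper's own proof fails at Subcase IB. It tacitly assumes every square root of $x^{-i}$ has the form $x^{-i/2}p_1\cdots p_t$ with transpositions $p_s$ commuting with $x$; here $i=2$ and $xy$ has order $4$, so this $y$ is not of that form.

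What your computation does establish is a corrected statement: for odd $x$ of order $2^m$, $\langle x\rangle$ is a perfect code of $S_n$ if and only if no $y\in S_n$ satisfies $y^{-1}xy=x^{-1}$ and $y^2=x^{2^{m-1}}$.
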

\begin{proof}
% $ H = \{x^i: (i,2^m)=1\}\bigsqcup \{x^i: (i,2^m) \neq 1\} = H\bigsqcup xH $, where $ H = \langle x^2 \rangle $.
%Without loss of generality, let $ x = (1234\cdots2^m)t_1t_2\cdots t_s $, where $ t_1, t_2,\cdots,t_s $ are disjoint nontrivial cycles in $ S_n $ such that $x\in S_n\setminus A_n$. 
Suppose $ H $ is not a perfect code in $ S_n $. Then, by Lemma \ref{lmn2.5}, there exist $ x_1, x_2,\dots, x_l \in S_n\setminus H $ with odd $ l $ such that $ (x_1H\cup x_2H\cup\cdots\cup x_l H )^{-1} = ( x_1H\cup X_2H\cup\cdots\cup x_lH )$ and their union contains no involution. To proceed further, we consider the following two cases.\\

\noindent \textbf{Case I: } Suppose $ l = 1 $. Then $ (x_1H)^{-1}=x_1H $ and this contains no involution. Thus, $ \{ x_1^{-1}, x^{-1}x_1^{-1},\dots, x^{-2^m-1}x_1^{-1}\} = \{ x_1, x_1x,\dots, x_1x^{2^m-1}\}$. Since $ x_1H $ does not contain an involution, therefore, $ x_1^{-1} \neq x_1 $. 

\noindent Suppose $ x_1^{-1} = x_1x^{i}$ for some $ i\in N_1 $, where $ N_1 = \{ 1\leq i\leq 2^m-1 : (i, 2^m) = 1 \}$. Then $ x_1^{-2} = x^i $, which is also a generator of $ H $ as $ i\in N_1 $. Consequently, there exists $ y\in S_n $ such that $ y^2 = x $. It follows from the Lemma \ref{lmn 3.1} that $ x\in A_n$, which contradicts the given condition.\\

\noindent Suppose $ x_1^{-1} = x_1x^{i}$ for some $ i\in N_2 $, where $ N_2 = \{ 1\leq i\leq 2^m-1 : (i, 2^m)\neq 1 \}$. Consequently, $ x_1^2 = x^{-i}$. Now, we divides this case into two parts.

\noindent\textbf{Subcase IA:} When $ x_1 = x^{(-\frac{i}{2})}.$ Then $ x_1H $ contains $ x_1x^{\frac{i}{2}}.$ Therefore, $ x_1\in H $, which is a contradiction. 

\noindent\textbf{Subcase IB:} When $ x_1 = x^{(-\frac{i}{2})}p_1p_2\cdots p_t $, where $p_1, p_2,\dots, p_t $ are disjoint transpositions in $ S_n $ and $ xp_i=p_ix$ for $ 1\leq i \leq t$. Thus, $ x_1H $ contains an involution $ x_1x^{\frac{i}{2}}(=p_1p_2\cdots p_t) $. Hence, we arrive at a contradiction. \\

\noindent \textbf{Case II:}  Suppose $ l > 1 $. Then $ \{ x_1\langle x \rangle\cup x_2\langle x \rangle \cup \cdots\cup\ x_l\langle x \rangle \}^{-1} = \{ x_1\langle x \rangle\cup x_2\langle x \rangle \cup \cdots\cup\ x_l\langle x \rangle \} $. Since $ H $ is cyclic, therefore, $(x_1\langle x \rangle\ )^{-1} = x_i\langle x \rangle $ for some $ i\in \{ 1, 2, \dots, l \} $. Then there exists $ i\in\{ 1, 2,\dots, l \} $ such that $ (x_i\langle x \rangle)^{-1} = \ x_i\langle x \rangle $ as $ l $ is odd. Thus, we arrive at a situation like Case I. This completes the proof.
\end{proof}

\begin{theorem}\label{Cyc2}
Let $ H = \langle x \rangle $ be a cyclic subgroup of order $ 2^m $ such that $ x\in A_n $, $m\in\mathbb{N}$. If $ x $ has a product of an odd number of same type of disjoint cycles, then $ H $ is perfect in $ S_n $; otherwise, $ H $ is not perfect in $ S_n $. 
\end{theorem}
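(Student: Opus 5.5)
We apply the normalizer criterion for $2$-subgroups (the second lemma quoted from \cite{Zhang2022}), together with Lemma~\ref{lmn2.5} for the negative direction. Write $H=\langle x\rangle$ with $o(x)=2^m$. Every cycle of $x$ has length a power of $2$. For $m\ge 1$ at least one cycle has even length. We read the hypothesis as follows: \emph{some even cycle length $k$ occurs an odd number of times in the cycle decomposition of $x$}. By the characterization in the proof of Lemma~\ref{lmn 3.1}, this says exactly that $x$ is not a square in $S_n$. Since $x\in A_n$, this is the substitute for the hypothesis $x\notin A_n$ in Theorem~\ref{thm 3}. We prove the two directions separately.

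\textbf{Negative direction.} Suppose every even cycle length occurs an even number of times. Pair up the equal even cycles of $x$, and take a square root of each odd cycle inside its own support. This gives $y\in S_n$ with $y^2=x$ and $y$ commuting with $x$. Then $y\notin H$, since otherwise $x$ would be a non-generator of $H$. Also $y^{-1}=yx^{-1}\in yH$, so $yH=(yH)^{-1}$. Moreover $yH=Hy=HyH$ is a single coset of $H$, an odd number.

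For every $j$ we have $(yx^j)^2=x^{1+2j}\neq 1$, because $1+2j$ is odd and $o(x)=2^m$. So $yH$ contains no involution. Lemma~\ref{lmn2.5} then shows that $H$ is not perfect in $S_n$.

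\textbf{Positive direction.} Suppose $x$ is not a square. By the criterion from \cite{Zhang2022}, it suffices to take any $y\in N_{S_n}(H)$ with $y^2\in H$ and find $b\in H$ with $(yb)^2=1$. Write $y^2=x^s$.

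If $s$ is odd, then $x^s$ is a generator of $H$ with the same cycle type as $x$. Hence $x^s$ is not a square, a contradiction. So $s=2k$ is even.

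Now $y$ induces an automorphism of $H$, say $y^{-1}xy=x^{r}$ with $r$ odd. Then
\[
(yx^j)^2 \;=\; y^2\,(y^{-1}x^jy)\,x^j \;=\; x^{2k+j(r+1)},
\]
so we must solve $2k+j(r+1)\equiv 0 \pmod{2^m}$ for $j$. If $r\equiv 1\pmod{2^m}$, take $j=-k$ and we are done; this is the Subcase~IB situation of Theorem~\ref{thm 3}. In general the congruence is solvable iff $\gcd(r+1,2^m)$ divides $2k$. Since $y$ commutes with $y^2=x^{2k}$, we get $x^{2kr}=x^{2k}$, i.e. $2^m\mid 2k(r-1)$, which already restricts $k$.

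\textbf{Main obstacle.} The difficult case is when $y$ inverts $x$ or acts as $x\mapsto x^{-1+2^{m-1}}$. Then $r+1$ is highly divisible by $2$, and the divisibility may fail unless we use the cycle structure. Here the plan is to use the hypothesis again. If the divisibility failed, a suitable element of $\langle x,y\rangle$, namely $yx^{j}$ for a well-chosen $j$, would have as its square an odd power of $x$. That odd power has the same cycle type as $x$, so $x$ would be a square, contradicting the assumption. We would first try to verify this directly on the odd block of $k$-cycles, where $y$ must permute the cycles of equal length. Any failure of the divisibility condition should force the square of $yx^{j}$ to involve an odd power of a single $k$-cycle block, so that the parity count of Lemma~\ref{lmn 3.1} yields the contradiction.
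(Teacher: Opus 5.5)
Your negative direction is correct and is essentially the paper's argument. A square root $y$ of $x$ commutes with $x$, lies outside $H$, and gives the inverse-closed coset $yH$ with $(yx^j)^2=x^{1+2j}\neq 1$.

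The positive direction, however, stops exactly where it breaks. Your ``main obstacle'' is narrower than you think, but it is fatal.
\begin{itemize}
\item If $r\equiv 1\pmod 4$, then $\gcd(r+1,2^m)=2$ divides $2k$.
\item If $r\equiv 3\pmod 4$, then $r-1\equiv 2\pmod 4$, and your constraint $2^m\mid 2k(r-1)$ gives $2^{m-1}\mid 2k$. This suffices unless $2^m\mid r+1$, so the twist $x\mapsto x^{-1+2^{m-1}}$ is harmless.
\end{itemize}
The only surviving case is $y^{-1}xy=x^{-1}$ with $y^2=x^{2^{m-1}}$, i.e.\ $\langle x,y\rangle$ is generalized quaternion of order $2^{m+1}$. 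There $(yx^j)^2=x^{2^{m-1}}\neq 1$ for \emph{every} $j$. So no $yx^j$ squares to an odd power of $x$, and the contradiction you hope to extract from the cycle structure never appears. The hypothesis ``some even cycle length occurs an odd number of times'' simply does not exclude this configuration.

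Concretely, in $S_{22}$ take
\[
x=(1\,2\,3\,4\,5\,6\,7\,8)(9\,10\,11\,12\,13\,14\,15\,16)(17\,18\,19\,20)(21\,22),
\]
\[
y=(1\,9\,5\,13)(2\,16\,6\,12)(3\,15\,7\,11)(4\,14\,8\,10)(18\,20).
\]
One checks directly that $yxy^{-1}=x^{-1}$ and $y^2=x^4$. Here $x\in A_{22}$, since it has four even-length cycles. It also has exactly one $4$-cycle, so it is not a square and satisfies your reading of the hypothesis. Yet $yH=Hy=(yH)^{-1}$, and every element of $yH$ squares to $x^4\neq 1$, so $H$ is not perfect by Lemma~\ref{lmn2.5}.

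Thus the statement, read as you read it, is false. This is also how the paper's proof reads it, via Lemma~\ref{lmn 3.1}. The paper's proof has the same hole in two places:
\begin{itemize}
\item Its Subcases IA/IB only cover $x_1=x^{-i/2}p_1\cdots p_t$, which commutes with $x$.
\item Its Case II treats the right coset $(x_1H)^{-1}=Hx_1^{-1}$ as if it were a left coset.
\end{itemize}

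What your computation actually proves is this: $H$ is perfect if and only if $x$ has an odd number of cycles of length $2^m$. The reason is as follows.
\begin{itemize}
\item A quaternion-type $y$ cannot preserve the support of a $2^m$-cycle. On such a support it would act as a reflection, which squares to $1$, while $x^{2^{m-1}}$ does not. So $y$ pairs those cycles up, and their number must be even.
\item Conversely, if the number is even, pair the $2^m$-cycles as in the example and use reflections on the shorter cycles; this produces such a $y$.
\end{itemize}
The parity of $x$ plays no role.
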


\begin{proof}
 We have $ H = \langle x \rangle $, where $ x $ has a product of an odd number of same type disjoint cycles and order of $ x $ is $2^m$. Then it follows from the Lemma \ref{lmn 3.1} that there does not exist $y\in S_n $ such that 
 \begin{equation}\label{eqn777777777777777777777777777777}
      y^2 = x.\\ 
 \end{equation}
 Suppose $ H $ is not perfect in $ S_n $. Then by Lemma \ref{lmn2.5}, there exist $ x_1, x_2,\dots, x_l \in S_n\setminus H $ such that $ K:= (x_1H\cup x_2H\cup\cdots\cup x_lH) = (x_1H\cup x_2H\cup\cdots\cup x_lH)^{-1}  $ with odd $ l $ and $ K $ contains no involution. We consider the following two cases similar to Theorem \ref{thm 3}.\\

\noindent \textbf{Case I: } Suppose $ l = 1 $. Then $ (x_1H)^{-1}=x_1H $ and this contains no involution. Thus, $ \{ x_1^{-1}, x^{-1}x_1^{-1},\dots, x^{-2^m-1}x_1^{-1}\} = \{ x_1, x_1x,\dots, x_1x^{2^m-1}\}$. Since $ x_1H $ does not contain an involution, therefore, $ x_1^{-1} \neq x_1 $. 

\noindent Suppose $ x_1^{-1} = x_1x^{i}$ for some $ i\in N_1 $, where $ N_1 = \{ 1\leq i\leq 2^m-1 : (i, 2^m) = 1 \}$. Then $ x_1^{-2} = x^i $, which is also a generator of $ H $ as $ i\in N_1 $. Consequently, there exists $ z\in S_n $ such that $ z^2 = x $, which is a contradiction due to equation (\ref{eqn777777777777777777777777777777}).\\

\noindent Suppose $ x_1^{-1} = x_1x^{i}$ for some $ i\in N_2 $, where $ N_2 = \{ 1\leq i\leq 2^m-1 : (i, 2^m)\neq 1 \}$. Consequently, $ x_1^2 = x^{-i}$. Now, we divide this case into two parts.

\noindent\textbf{Subcase IA:} When $ x_1 = x^{(-\frac{i}{2})}.$ Then $ x_1H $ contains $ x_1x^{\frac{i}{2}}.$ Therefore, $ x_1\in H $, which is a contradiction. 

\noindent\textbf{Subcase IB:} When $ x_1 = x^{(-\frac{i}{2})}p_1p_2\cdots p_t $, where $p_1, p_2,\dots, p_t $ are disjoint transpositions in $ S_n $ and $ xp_i=p_ix$ for $ 1\leq i \leq t$. Thus, $ x_1H $ contains an involution $ x_1x^{\frac{i}{2}}(=p_1p_2\cdots p_t) $. Hence, we arrive at a contradiction. \\

\noindent \textbf{Case II:}  Suppose $ l > 1 $. Then $ \{ x_1\langle x \rangle\cup x_2\langle x \rangle \cup \cdots\cup\ x_l\langle x \rangle \}^{-1} = \{ x_1\langle x \rangle\cup x_2\langle x \rangle \cup \cdots\cup\ x_l\langle x \rangle \} $. Since $ H $ is cyclic, therefore, $\langle x \rangle\ x_1^{-1} = x_i\langle x \rangle $ for some $ i\in \{ 1, 2, \dots, l \} $. As $ l $ is odd, then there exists $ i\in\{ 1, 2,\dots, l \} $ such that $ (x_i\langle x \rangle)^{-1} = \ x_i\langle x \rangle $. Thus, we arrive at a contradiction due to Case I.

\noindent Hence, $ H $ is perfect in $ S_n $.\\

\noindent Let $ H = \langle x \rangle $ such that $ x $ does not have a product of an odd number of same type disjoint cycles. We claim that there exists a nontrivial coset $ x_1H$ with $ (\ x_1H \ )^{-1} = x_1H $ and contains no involution. Since  $ x $ does not have a product of an odd number of the same type of disjoint cycles, therefore, there exists $ x_1\in S_n $ such that $ x_1^2 = x $. Now, $ (\ x_1x^i \ )^{-1} = x_1(\ x_1^{-1}x^{-i}x_1^{-1} \ ) $ for $ 1\leq i \leq 2^m$. Then $  x_1^{-1}x^{-i}x_1^{-1} = x_1^{-1}x_1^{-2i}x_1^{-1} = x_1^{-2(i+1)} = x^{-(i+1)} \in H $. Hence, $ (\ x_1x^{i} \ ) ^ {-1} \in x_1H $ for  $ 1\leq i \leq 2^m$ and consequently, $ (x_1H)^{-1} = x_1H $. Next, we claim that $ x_1H $ does not contain an involution. Suppose $ x_1H $ contains an involution, say $ x_1x^j $ for some $ 1\leq j \leq 2^m $. Then $ (x_1x^j)^{-1} = x_1x^j $. Consequently, $ x_1^{-1}x^{j+1}x_1 = x^{-j} $. This shows that $ o (x^{j+1}) = o(x^{j}) $, which is a contradiction as $ o(x) $ is even. Hence, $ H $ is not perfect in $ S_n $. 
\end{proof}

\begin{cor}
Let $ H $ be a cyclic subgroup of $ S_n $ which is not perfect in $ S_n$. Then $ H \subset A_n $.
\end{cor}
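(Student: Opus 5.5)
We prove the contrapositive: if a cyclic subgroup $H$ of $S_n$ is not contained in $A_n$, then $H$ is perfect in $S_n$. The main input is Theorem \ref{thm 3}, which covers the case where $H$ is a cyclic $2$-group generated by an odd permutation. The rest of the argument reduces a general cyclic subgroup to that case using Lemma \ref{lm1}.

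Let $H=\langle g\rangle$ with $H\not\subset A_n$. Since $A_n$ is a subgroup, $H\not\subset A_n$ forces $g\notin A_n$, so $g$ is odd. Write $o(g)=2^m k$ with $k$ odd. If $m=0$, then $|H|$ is odd, and $H$ is perfect by the lemma of \cite{Zhang2021} on subgroups of odd order. So assume $m\ge 1$.

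The unique Sylow $2$-subgroup of $H$ is $Q=\langle g^k\rangle$, a cyclic group of order $2^m$. The sign map is a homomorphism, so $\operatorname{sgn}(g^k)=\operatorname{sgn}(g)^k=(-1)^k=-1$ because $k$ is odd. Hence $g^k\in S_n\setminus A_n$. Theorem \ref{thm 3} then says that $Q$ is perfect in $S_n$.

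By Lemma \ref{lm1}, a subgroup having a Sylow $2$-subgroup that is a perfect code of $G$ is itself a perfect code of $G$. Applying this to $Q\le H$ shows that $H$ is perfect in $S_n$. This contradicts the hypothesis, so $H\subset A_n$.

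The only delicate point is the parity step: we need $k$ odd so that the $2$-part generator $g^k$ keeps the odd sign of $g$. Everything else is a direct application of Theorem \ref{thm 3} and Lemma \ref{lm1}.
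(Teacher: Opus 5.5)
Your reduction is sound. The odd-order case, the observation that $g^k$ is odd because $k$ is odd, and the transfer from the Sylow $2$-subgroup to $H$ via Lemma \ref{lm1} are all correct. The paper gives no proof of this corollary; it is meant as an immediate consequence of Theorem \ref{thm 3}, which only treats cyclic $2$-groups, so your Sylow reduction fills in a step the paper skips.

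The delicate point, however, is not the parity step but Theorem \ref{thm 3} itself. That theorem is false, and with it the corollary. Take $n=10$, $x=(1\,2\,3\,4)(5\,6\,7\,8)(9\,10)$, $H=\langle x\rangle$ and $g=(1\,5\,3\,7)(2\,8\,4\,6)$.
\begin{itemize}
\item $x$ is odd of order $4$, so $H\not\subset A_{10}$.
\item $g^2=(1\,3)(2\,4)(5\,7)(6\,8)=x^2$.
\item Conjugation by $g$ inverts $x$: relabelling the cycles of $x$ via $g$ gives $(5\,8\,7\,6)(3\,2\,1\,4)(9\,10)=x^{-1}$.
\end{itemize}
Hence $gH=Hg$ and $(gH)^{-1}=g^{-1}H=gx^2H=gH$. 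Moreover, for every $s$,
\[
(gx^s)^2=g^2\,(g^{-1}x^sg)\,x^s=x^2x^{-s}x^s=x^2\neq 1 .
\]
So $HgH=gH$ is a single inverse-closed left coset containing no involution. By Lemma \ref{lmn2.5}, $H$ is not perfect in $S_{10}$. Directly: an inverse-closed transversal as in Lemma \ref{lmn 2.2} would have to meet $gH$ in a self-inverse element, which does not exist. The same coset works in every $S_n$ with $n\ge 10$.

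Structurally, $\langle x,g\rangle\cong Q_8$. It acts regularly on $\{1,\dots,8\}$ and on the two cosets of $\langle g\rangle$, and that extra $2$-point orbit is what makes $x$ odd.

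The error in the proof of Theorem \ref{thm 3} is in Subcases IA/IB of Case I. These assume that every $x_1$ with $x_1^2=x^{-i}$ equals $x^{-i/2}$, possibly times transpositions commuting with $x$. The $g$ above is a square root of $x^2$ that inverts $x$ instead, so those subcases are not exhaustive.

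You can run the normalizer criterion quoted in Section 2 (valid because $Q$ is a $2$-group) for $Q=\langle x\rangle$ with $x$ odd of order $2^m$. Any $g\in N_{S_n}(Q)$ with $g^2\in Q$ satisfies $g^{-1}xg=x^r$ with $r$ odd, and $g^2=x^{2j}$ is an even power of $x$ because squares are even permutations. Then $(gx^s)^2=x^{2j+s(r+1)}$. Using $2j(r-1)\equiv 0 \pmod{2^m}$ (since $g$ commutes with $g^2$), the congruence $2j+s(r+1)\equiv 0\pmod{2^m}$ is unsolvable in $s$ exactly when $r\equiv -1\pmod{2^m}$ and $g^2=x^{2^{m-1}}$, that is, when $\langle x,g\rangle$ is generalized quaternion.

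So your argument can only establish the corollary under an extra hypothesis: the Sylow $2$-subgroup of $H$ is not the cyclic maximal subgroup of any generalized quaternion subgroup of $S_n$. This holds automatically for $n\le 9$. Such a subgroup must have a regular orbit, on which $x$ acts as two $2^m$-cycles and hence evenly, and for $n\le 9$ there is no room for a further orbit on which $x$ is odd. It does not hold in general, so the statement as given cannot be proved.
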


\noindent Note that if a cyclic subgroup of $ S_n $ is perfect, that subgroup may or may not be a subset of $ A_n $. This is solely dependent on the subgroup's generators. This gives a general idea to construct a cyclic perfect code in $S_n$.   
\begin{example}
Let $ G = S_{10} $, $ K = \langle (1\; 2\; 3\; 4)(5\; 6), (7\; 8\; 9) \rangle $ and $ H =  \langle (1 \;2\; 3\; 4)(5\; 6) \rangle .$ Then $ H $ is Sylow $ 2 $-subgroup of $ K.$ It follows from Theorem \ref{Cyc2} that $ H $ is not a perfect code in $ G$. Therefore, $ K $ is not perfect in $ G $ due to Lemma \ref{lm1}.  
\end{example}

%%%%%%%%%%%%%%%%%%%%%%%%%%%%%%%%%%%%%%%%%%%%%%%%%%%%%%%%%%%%%%%%%%%%%%%%%%%%%%%%%%%%%%%%%%%%

\section{Some non-cyclic subgroup codes in $ S_n $}
In this section, we explore the construction of non-cyclic subgroup codes, both commutative and some non-commutative, that are not perfect within $ S_n $. Through an example, we demonstrate that the isomorphism of the group may not preserve the perfectness of the subgroup code.

\begin{theorem}\label{thm5}
Let $ H = \langle x_1, x_2, \dots, x_m \rangle $ be a commutative subgroup of order $ 2^k$ in $S_n$ such that $ x^2 = x_1$ for some $ x\in S_n\setminus H$, $ m, k \in\mathbb{N}$. If $ xx_i = x_ix $  for $i \in \{ 2, 3, \dots, m \} $, then $H$ is not perfect in $S_n.$
\end{theorem}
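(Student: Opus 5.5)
The plan is to apply Lemma \ref{lmn2.5} with the single left coset $D = xH$. Since $x$ will turn out to centralize $H$, this coset is also the double coset $HxH$, and it contains exactly one left coset of $H$, which is an odd number. So it suffices to show two things: that $(xH)^{-1} = xH$, and that $xH$ contains no involution.

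\textbf{Step 1: $x$ centralizes $H$.} By hypothesis $x$ commutes with $x_2,\dots,x_m$. It also commutes with $x_1 = x^2$, because $x_1$ is a power of $x$. Hence $x$ commutes with every element of $H = \langle x_1,\dots,x_m\rangle$. In particular $HxH = xH$, and this coset is nontrivial because $x \notin H$.

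\textbf{Step 2: $xH$ is closed under inversion.} Take $h \in H$. Using Step 1,
\[
(xh)^{-1} = h^{-1}x^{-1} = x^{-1}h^{-1} = x\,x^{-2}h^{-1} = x\,(x_1^{-1}h^{-1}).
\]
Since $x_1^{-1}h^{-1} \in H$, we get $(xH)^{-1} \subseteq xH$, and equality follows because both sets have $|H|$ elements.

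\textbf{Step 3: $xH$ has no involution.} This is the main obstacle. Every element of $xH$ is different from $1$, since $x \notin H$. By Step 1, for $h \in H$ we have $(xh)^2 = x^2h^2 = x_1h^2$. So $xh$ is an involution exactly when $x_1 = (h^{-1})^2$. Thus we must show that $x_1$ is not a square of any element of $H$.

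This cannot come from the centralizing condition. It has to come from $x_1$ being one of the listed generators of $H$. In general it fails for redundant generating lists: the list $(13)(24),(1234)$ with $x=(1234)(56)$ satisfies every other hypothesis, yet $xH$ contains the involution $(56)$. I would therefore read $\{x_1,\dots,x_m\}$, as is implicit in the statement, as an irredundant (minimal) generating set of $H$.

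With that reading the step closes as follows. $H$ is a finite abelian $2$-group, so its Frattini subgroup is $\Phi(H) = H^2 = \{h^2 : h\in H\}$. By the Burnside basis theorem, a minimal generating set of $H$ maps to a basis of $H/\Phi(H)$. In particular $x_1 \notin \Phi(H) = H^2$. Hence $x_1h^2 \neq 1$ for every $h\in H$, and $xH$ contains no involution.

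\textbf{Conclusion.} $D = xH = HxH$ satisfies $D = D^{-1}$, consists of one (an odd number of) left coset of $H$, and contains no involution. By Lemma \ref{lmn2.5}, $H$ is not a perfect code of $S_n$.

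In writing this up I would state explicitly that the generating set is taken to be minimal. I would also note that Step 3 is the only place where this is used, and that the argument breaks exactly when $x_1 \in H^2$.
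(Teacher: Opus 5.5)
Your argument is correct. Your counterexample also shows the printed statement is false, not just that your method fails. For $H=\langle(1\,2\,3\,4)\rangle$ one has $N_{S_n}(H)=\langle(1\,2\,3\,4),(1\,3)\rangle\times \mathrm{Sym}(\{5,\dots,n\})$. Any $y=ds$ in it with $y^2\in H$ has $s^2=1$. Then $yd^{-1}=s$ (if $d\in H$) or $y$ itself (if $d$ is one of the four reflections, all involutions) squares to $1$. So $H$ is perfect in $S_n$ by the normalizer criterion for $2$-subgroups recalled in Section 2, although all hypotheses of the theorem hold. An extra hypothesis is therefore unavoidable. Irredundancy of $x_1,\dots,x_m$ is an added assumption, not something implicit in the statement. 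Your argument really needs only $x_1\notin H^2=\Phi(H)$, which is weaker and would be the natural hypothesis to state.

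The paper uses the same witness coset $xH$ but checks it differently. It writes $xH=xH_1\sqcup\bigsqcup_i xw_iH_1$ with $H_1=\langle x_1\rangle$ and takes the properties of $xH_1$ from Theorem \ref{Cyc2}. It shows $(xw_iH_1)^{-1}=xw_i^{-1}H_1$ and then splits into two cases:
\begin{itemize}
\item if $w_i^{-1}\neq w_i$, it claims $xw_iH_1$ contains no involution;
\item if $w_i^2=1$, an involution $xw_ih_1$ would give the involution $xh_1\in xH_1$.
\end{itemize}
The first case is where it breaks. The coset $xw_i^{-1}H_1$ is disjoint from $xw_iH_1$ exactly when $w_i^2\notin H_1$, which $w_i^{-1}\neq w_i$ does not guarantee. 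The case $1\neq w_i^2\in H_1$ is never treated, and your example sits precisely there: $w_1=(1\,2\,3\,4)$, $w_1^2=x_1$, and $xw_1H_1=\{(1\,3)(2\,4)(5\,6),\,(5\,6)\}$ consists of involutions. You use that $x$ centralizes $H$ to compute $(xh)^2=x_1h^2$ directly. This removes the case split and isolates the exact condition, $x_1\notin H^2$, for $xH$ to be involution-free, which the Burnside basis theorem then supplies from irredundancy. Your route is shorter than the paper's and, unlike it, complete.
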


\begin{proof}
Let $ H_1 = \langle x_1\rangle $ and $ H = H_1 \bigsqcup_{i=1}^{p} w_iH_1$, where $ w_i = x_2^{i_2}\cdots x_m^{i_m}$; $ i_2, \dots, i_m \in \mathbb{N}\cup \{0\}$. It follows from Theorem \ref{Cyc2} that $ (xH_1)^{-1} =  xH_1 $ and it contains no involution. Then
\begin{align*}
(xH)^{-1} & = (xH_1)^{-1} \bigsqcup_{i=1}^{p } (xw_iH_1)^{-1}\\
         & = (xH_1)\bigsqcup_{i=1}^{p } H_1w_{i}^{-1}x^{-1}.\\
\end{align*} 

\noindent Since  $ xx_i = x_ix $ for $ 2\leq i\leq m$, therefore, $w_j^{-1}x^{-1} = x^{-1} w_j^{-1} $ for $ 1\leq j \leq p $. Then $ H_1 w_i^{-1}x^{-1}  = xH_1w_i^{-1} $ as $ (xH_1)^{-1} =  xH_1 .$ Since $ H $ is commutative, therefore, $  H_1 w_i^{-1}x^{-1} = x w_i^{-1}H_1.$ Thus, $(x w_iH_1)^{-1} = x w_i^{-1}H_1$ for $ 1 \leq i \leq p.$
    To proceed further, we consider the following two cases.\\
    
\noindent \textbf{Case I:} When  $ w_i^{-1}  \neq  w_i$.
    Then $ xw_iH_1$ does not contain an involution.\\
    
    %If $ (x w_iH)^{-1} = x w_i^{-1}H $ for some $ j \in \{1, 2, \dots, 2^t-1\} .$ Then $ w_i^{-j}  =  w_i^j.$ This implies that $j = 2^{t-1}$ for $ j \neq 2^{t-1} $ and $ j \in \{1, 2, \dots, 2^t-1\}. $ Thus, $  (x w_i^jH)^{-1} = x w_i^jH .$ Therefore, for $ j \in \{1,2,\dots,2^t-1\} \setminus \{2^{t-1}\}, $ $ x w_i^iH $ does not contain an involution.\\

\noindent \textbf{Case II:} When  $ w_i^{-1} =  w_i$.
    Suppose $ x w_iH_1 $ contains an involution, say  $ x w_ih_1 ,$ for some $ h_1 \in H_1 $. Then $(x w_ih_1) (x w_ih_1)  = 1_{S_n}.$ This implies $(xh_1)^2 = 1_{S_n}.$ Consequently, $ xH_1 $ contains an involution which is a contradiction. Hence, the desired result follows.
\end{proof}

\noindent \textbf{Note:} Observe that Theorem \ref{thm5} inspires the construction of higher-order codes that are not perfect in $ S_n $ based on the given subgroup codes which are not perfect in $S_n$. Take an element $ z\in S_n \setminus H $ such that $ zx = xz $ and $ zx_i = x_iz $ for $ 2\leq i \leq m $. Then $ \langle H, z \rangle $ is not perfect in $ S_n $.

%%%%%%%%%%%%%%%%%%%%%%%%%%%%%%%%%%%%%%%%%%%%%%%%%%%%%%%%%%%%%%%%%%%%%%%%%%%%%%%%%%%%%%%%%%%%%%%%%%%%%%%%%%%%%

%%%%%%%%%%%%%%%%%%%%%%%%%%%%%%%%%%%%%%%%%%%%%%%%%%%%%%%%%%%%%%%%%%%%%%%%%%%
\subsection{Non-commutative subgroup codes}

Next, we will discuss certain non-abelian codes that are not perfect in $ S_n $. These non-abelian subgroups are isomorphic to $ D_n $, $\mathbb{Z}_n \rtimes \mathbb{Z}_2$, subgroups with a minimal generating set of cardinality $2$ with some conditions. Utilizing these subgroups, we will construct higher-order subgroup codes that are also not perfect in $ S_n $.

\begin{theorem}\label{k11}
     Let $ \langle x_1, x_2 \rangle $ be a non-commutative subgroup of order $2^{l+1} $ such that $ y^{2}=x_1^{-1} $  and $o(x_1) = 2^l $, $ o(x_2) = 2$ for some $ y\in S_n$, $ l\in \mathbb{N}$. If $x_2y^{-1} = y^{-k }x_2 $ for some odd integer $ k $ such that $ k\not\equiv -1 \pmod{2^{l+1}} $, then $ \langle x_1, x_2 \rangle $ is not perfect in $S_n$. 
\end{theorem}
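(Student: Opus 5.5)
The plan is to apply Lemma \ref{lmn2.5} with the single left coset $yH$, where $H=\langle x_1,x_2\rangle$. If $yH=HyH$, $(yH)^{-1}=yH$, $y\notin H$, and $yH$ contains no involution, then $D=HyH$ is a self-inverse double coset consisting of one (an odd number of) left coset of $H$ with no involution. Lemma \ref{lmn2.5} then says that $H$ is not perfect in $S_n$.

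\textbf{Step 1: structure of $H$ and of $y$.} Since $|H|=2^{l+1}$, $o(x_1)=2^l$ and $H$ is non-commutative, $x_2\notin\langle x_1\rangle$. Hence $H=\langle x_1\rangle\sqcup x_2\langle x_1\rangle$. From $y^2=x_1^{-1}$ we get $o(y)=2^{l+1}$ and $\langle y^2\rangle=\langle x_1\rangle$. If $y$ were in $H$, then $H=\langle y\rangle$ would be cyclic, contradicting non-commutativity; so $y\notin H$. Taking inverses in $x_2y^{-1}=y^{-k}x_2$ (and using $x_2^{-1}=x_2$) gives $yx_2=x_2y^{k}$, that is, $x_2yx_2=y^k$, and hence $x_2y^sx_2=y^{ks}$ for every integer $s$.

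\textbf{Step 2: $y$ normalizes $H$, so $HyH=yH$.} Clearly $y$ commutes with $x_1=y^{-2}$. Next, $yx_2y^{-1}=y\cdot y^{-k}x_2=y^{1-k}x_2$. Since $k$ is odd, $1-k$ is even, so $y^{1-k}\in\langle y^2\rangle=\langle x_1\rangle$ and therefore $yx_2y^{-1}\in H$. Thus $yHy^{-1}=H$, which gives $HyH=yH$, a single left coset. Moreover $y^{-1}=y\cdot y^{-2}=yx_1\in yH$, so $(yH)^{-1}=Hy^{-1}=y^{-1}H=yH$.

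\textbf{Step 3: $yH$ contains no involution.} This is the main computation. Every element of $yH$ is either $yx_1^j=y^{1-2j}$ or $yx_1^jx_2=y^{s}x_2$ with $s=1-2j$ odd.

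For the first type, $(y^{s})^2=y^{2s}$ with $s$ odd. Since $o(y)=2^{l+1}$ and $l\ge 1$, the power $y^{2s}$ is nontrivial, so $y^s$ is not an involution.

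For the second type, using Step 1,
\[
(y^sx_2)^2=y^s\,(x_2y^sx_2)=y^{s+ks}=y^{s(1+k)}.
\]
This equals $1$ if and only if $2^{l+1}\mid s(1+k)$. Since $s$ is odd, that is equivalent to $k\equiv-1\pmod{2^{l+1}}$, which is excluded by hypothesis. So no element of $yH$ is an involution.

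The only delicate point is this last case, where the hypothesis $k\not\equiv -1$ is exactly what is needed. One should also double-check that the conjugation identity $x_2y^sx_2=y^{ks}$ is correctly derived from the stated relation. With Steps 1–3 in hand, Lemma \ref{lmn2.5} completes the proof.
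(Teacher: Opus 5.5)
Your proof is correct and follows the paper's route. Like the paper, you apply Lemma~\ref{lmn2.5} to the single coset $yH=y\langle x_1\rangle\sqcup yx_2\langle x_1\rangle$ and rule out involutions via $(y^sx_2)^2=y^{s(1+k)}$ with $s$ odd, which is the paper's congruence $(2t-1)(1+k)\equiv 0 \pmod{2^{l+1}}$. The one difference is that you get $(yH)^{-1}=yH$ from $yHy^{-1}=H$ rather than by inverting elements one at a time, which also makes explicit the double-coset condition $HyH=yH$ that the paper leaves implicit.
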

\begin{proof}
Let $ H = \langle x_1, x_2 \rangle   $. We have $x_2y^{-1} = y^{-k }x_2 $. Then $ x_2x_1=(x_2y^{-1})y^{-1}=y^{-2k}x_2=(y^{-2})^kx_2=x_1^kx_2$. Since $ y^{2}=x_1^{-1} $ and $ o(x_1) = 2^l $, therefore, $ o(y) = 2^{l+1} $. Hence, $ y \notin H $ due to non-commutativity of $H$. Thus, we have 
\begin{equation}\label{Thm_equ1}
yH = y\langle x_1 \rangle \bigsqcup yx_2 \langle x_1 \rangle.
\end{equation}

\noindent Now, 
\begin{align*}
         [y \langle x_1 \rangle]^{-1} & = \{yx_1^i:i= 0, 1, \dots, 2^{l}-1 \}^{-1}\\
    & = \{ x_1^{-i}y^{-1}:i= 0, 1, \dots, 2^{l}-1 \}\\
    & = \{ y^{2i-1}:i= 0, 1, \dots, 2^{l}-1\}\\
    & = \{ yy^{2i-2}:i= 0, 1, \dots, 2^{l}-1 \}\\   
    & = \{ y(y^2)^{i-1}:i=0, 1, \dots, 2^{l}-1 \}\\ 
    & = \{ yx_1^{1-i}:i= 0, 1, \dots, 2^{l}-1 \}\\ 
    & = y \langle x_1 \rangle.
\end{align*}
    
\noindent Suppose there exists $ m \in \{0, 1, \dots, 2^{l}-1 \} $ such that\\
    \begin{align*}
        yx_1^{1-m} & = (yx_1^{1-m})^{-1}\\
       \iff yx_1^{1-m} & = x_1^{m-1}y^{-1}\\
       \iff yy^{-2(1-m)} & = y^{-2(m-1)}y^{-1}\\
       \iff y^{2m-1} & = y^{-2m+1}.\\
    \end{align*}
Therefore, $ 2m-1  = -2m+1  + t2^{l+1} ;$ $ t\in\mathbb N\cup \{0\}.$ This implies that $ 2m = 1+2(2^{l-1}t),$ which has no integer solution. Hence, $ [y \langle x_1 \rangle]^{-1} = y \langle x_1 \rangle $ and it contains no involution.
    
\noindent Since $ k $ is odd, then there exists $ m_1 \in \mathbb Z $ such that $ k+1 = 2m_1 .$ Now, 
    \begin{align*}
         [yx_2 \langle x_1 \rangle]^{-1} & = \{ yx_2x_1^i  :i= 0, 1, \dots, 2^{l}-1\}^{-1}\\
    & = \{ x_1^{-i}x_2y^{-1}:i= 0, 1, \dots, 2^{l}-1 \}\\
    & = \{ x_1^{-i}y^{-k}x_2:i= 0, 1, \dots, 2^{l}-1 \}\\
    & = \{ y^{(2i-k)}x_2 :i=0, 1, \dots, 2^{l}-1 \}\\   
     & = \{ yy^{(2i-k-1)}x_2 :i=0, 1, \dots, 2^{l}-1 \}\\   
    & = \{ yy^{-2(m_1-i)}x_2 :i=0, 1, \dots, 2^{l}-1 \}\\   
    & = \{ yx_1^{(m_1-i)}x_2 :i= 0, 1, \dots, 2^{l}-1 \}\\ 
    & = y \langle x_1 \rangle x_2\\
    & = yx_2 \langle x_1 \rangle.
    \end{align*}
    Suppose there exists $t \in \mathbb{N}\cup\{0\} $ such that
   \begin{align*}
       [yx_2x_1^{t}]^{-1}  & =  yx_2x_1^{t} \\
       \iff x_1^{-t}x_2y^{-1} & = yx_2x_1^{t}\\
        \iff y^{2t}y^{-k}x_2 & = yx_2y^{-2t}\\
        \iff y^{(2t-k)}x_2 & = yy^{(-2t)k}x_2.
    \end{align*}
    Therefore,
    \begin{align*}
        2t-k & = 1-2tk + t_12^{l+1}; t_1\in\mathbb N \cup \{0\}\\
        \iff (2t-1)(1+k) & = t_12^{l+1}.\\
    \end{align*}
    Since $ k $ is odd, therefore, $ k\equiv -1 \pmod{2^{l+1}} $ which contradicts the give condition.
    
\noindent Hence, from equation (\ref{Thm_equ1}), we get $ [yH]^{-1} = yH $ and it contains no involution. Therefore, $H$ is not perfect in $S_n$.
\end{proof}

\noindent We use the aforesaid result to show that group isomorphism may not preserve the perfectness of the subgroup code. The following example reflects the statement. 
\begin{example}
Let $ H_1=\langle (1\;4\;7\;6)(2\;8\;3\;5), (2\;5)(3\;8)(4\;6)\rangle $ and $ H_2 = \langle (1\;6)(2\;4)(3\;8)(5\;7), (1\;8\;5\;4)(2\;7\;3\;6)\rangle $ be subgroups of order $ 8 $ in $ S_8 $. Both $ H_1$ and $ H_2 $ are isomorphic to $ D_4$. Consequently, $ H_1\cong H_2 $. It follows from the proof of Theorem \ref{k11} that $ (1\;2\;6\;5\;7\;3\;4\;8)H_1 $ is inverse-closed and contains no involution. Consequently, $ H_1 $ is not perfect in $ S_8.$ Now, $ N_{S_8}(H_2) = H_2 \cup (4\;8)(6\;7)H_2\cup (2\;3)(6\;7)H_2\cup (2\;6)(3\;7)(4\;8)H_2\cup (2\;7)(3\;6)(4\;8)H_2\cup (2\;6\;3\;7)H_2\cup (2\;7\;3\;6)H_2.$ Then $ H_2 $ is perfect in $  N_{S_8}(H_2) $. It follows from Lemma \ref{lmn1.5} that $ H_2 $ is perfect in $ S_8 $. Hence, we can see that group isomorphism may not preserve the perfectness of the subgroup code. 
\end{example}

\begin{theorem}\label{thm8}
     Let $ \langle x_1, x_2 \rangle $ be a non-commutative subgroup of order $2^{l+1} $ such that $ y^{2}=x_1^{-1} $  and $o(x_1) = 2^l $, $ o(x_2) = 2$ for some $ y\in S_n$, $ l\in \mathbb{N}$. Suppose $x_2y^{-1} = y^{-k }x_2 $ for some odd integer $ k $ such that $ k\not\equiv -1 \pmod{2^{l+1}} $. If $ x_3 y = y x_3$ and $ x_2x_3 = x_3x_2 $, then $ \langle x_1, x_2, x_3 \rangle $ is not perfect in $S_n$.
\end{theorem}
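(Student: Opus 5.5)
Set $H=\langle x_1,x_2\rangle$ and $K=\langle x_1,x_2,x_3\rangle$. The plan is to take the element $y$ from the hypotheses and show, via Lemma \ref{lmn2.5}, that the double coset $KyK$ witnesses the non-perfectness of $K$. In the proof of Theorem \ref{k11} we already showed that $yH=y\langle x_1\rangle\sqcup yx_2\langle x_1\rangle$ satisfies $(yH)^{-1}=yH$ and contains no involution. The task is to lift this from $H$ to $K$, in the same spirit as the passage from $H_1$ to $H$ in Theorem \ref{thm5}.

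First we record the commutation relations. Since $x_3$ commutes with $y$, it commutes with $y^{-2}=x_1$. Since $x_3$ also commutes with $x_2$, it centralizes $H$. Hence $H\trianglelefteq K$, and $K=\bigsqcup_{j} x_3^{j}H$ with $j$ ranging over a transversal of $H\cap\langle x_3\rangle$ in $\langle x_3\rangle$. Moreover $y$ normalizes $H$: it commutes with $x_1$, and $x_2y^{-1}=y^{-k}x_2$ gives $y x_2 y^{-1}\in \langle y\rangle x_2$. I would check that this actually lies in $H$, using $y^{1-k}=x_1^{(k-1)/2}$ since $k$ is odd. Consequently $KyK=yK$ is a single left coset, so the parity condition in Lemma \ref{lmn2.5} is automatic. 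It also remains to check $y\notin K$. I expect $y\in K$ would force $y$ into the centralizing part and contradict the non-commutativity relation with $x_2$, but this needs an argument.

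Next we show $yK$ is inverse-closed. Write $yK=\bigsqcup_j y x_3^{j}H$. Since $x_3$ commutes with $y$ and with $H$, we get
\[
(yx_3^jH)^{-1}=H y^{-1}x_3^{-j}=x_3^{-j}(yH)^{-1}=x_3^{-j}yH=yx_3^{-j}H .
\]
So inversion permutes these cosets, and $yK$ is inverse-closed.

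Finally we show $yK$ contains no involution, splitting into cases as in Theorem \ref{thm5}. Suppose $(yx_3^jh)^2=1$ with $h\in H$. Because $x_3^j$ is central in $\langle y,H,x_3\rangle$, this becomes $(yh)^2x_3^{2j}=1$, so $(yh)^2=x_3^{-2j}\in\langle x_3\rangle$. In particular $(yh)^2$ commutes with $x_2$.

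The main obstacle is ruling out $(yh)^2$ being a nontrivial element of $\langle x_3\rangle$. The case $x_3^{2j}=1$ reduces directly to the involution-free property of $yH$ from Theorem \ref{k11}. For the remaining case, I would compute $(yh)^2$ explicitly for $h=x_1^t$ and $h=x_2x_1^t$. In both cases it equals an explicit power of $y$, possibly times a product involving $x_2$, by the relations used in Theorem \ref{k11}. I would then argue that an element of $\langle y\rangle H$ lying in $\langle x_3\rangle$ must be trivial, or else the order computation modulo $2^{l+1}$ from Theorem \ref{k11} again forces $k\equiv-1\pmod{2^{l+1}}$. If this fails in general, I would fall back on the tacit assumption that $\langle x_3\rangle\cap\langle y,H\rangle=1$, i.e. that $x_3$ acts on points disjoint from the support of $y$ and $x_2$, and separate the components there. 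With inverse-closure and the absence of involutions in hand, Lemma \ref{lmn2.5} gives that $K$ is not perfect in $S_n$.
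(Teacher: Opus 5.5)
\textbf{Gap.} Your setup follows the same route as the paper and is sound up to the last step. Specifically: $x_3$ centralizes $H$ and $L=\langle y,x_2\rangle$, $y$ normalizes $K$, and $yK$ is inverse-closed. The fact $y\notin K$ is also easy: if $y=hx_3^j$ with $h\in H$, then $x_3^j=h^{-1}y\in L$ is central in $L$, and $Z(L)=\langle y^{2^l}\rangle\le H$, which forces $y\in H$.

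The gap is exactly the step you flag, and it cannot be closed. The hypotheses pin down $k$:
\begin{itemize}
\item from $x_2yx_2=y^k$ and $x_2^2=1$ you get $k^2\equiv 1 \pmod{2^{l+1}}$;
\item non-commutativity rules out $k\equiv 1$ and $k\equiv 2^l+1$;
\item $k\not\equiv -1$ then leaves $k\equiv 2^l-1$.
\end{itemize}
Hence $(yx_2)^2=y^{1+k}=y^{2^l}=z$, the central involution of $H$. So for $h\in H$, $(yh)^2$ is either $y^{2j}$ with $j$ odd or $z$. The former does not commute with $x_2$, so it never lies in $\langle x_3\rangle$. Consequently $yK$ is involution-free if and only if $z\notin\langle x_3^2\rangle$. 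Your hoped-for claim, that an element of $\langle y\rangle H$ lying in $\langle x_3\rangle$ must be trivial or force $k\equiv -1$, is false. For instance, if $x_3^2=z$, then $yx_2x_3$ is an involution in $yK$.

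\textbf{The statement fails in this case.} No other witness can rescue it. Take $l=2$ and $M=\langle y,x_2,x_3\rangle$ with $y^8=x_2^2=1$, $x_2yx_2=y^3$, $x_3$ central and $x_3^2=y^4$. This is the central product $SD_{16}\circ C_4$, of order $32$. Let $M$ act on the $16$ left cosets of $\langle t\rangle$, where $t=yx_2x_3$.
\begin{itemize}
\item $t$ is a non-central involution, so the action is faithful.
\item With $x_1=y^{-2}$, all hypotheses of the statement hold with $k=3$.
\item $K=\langle x_1,x_2,x_3\rangle$ has order $16$, is normal in $M$, and meets $\langle t\rangle$ trivially (since $t\in yK$), so $K$ acts regularly.
\end{itemize}
A point stabilizer of $S_{16}$ is then a complement to $K$, hence an inverse-closed left transversal. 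So $K$ is perfect in $S_{16}$ by Lemma \ref{lmn 2.2}.

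\textbf{The paper's proof.} It has the same hole. Its Case II deduces $(yx_3^iH)^{-1}\neq yx_3^iH$ from $x_3^{-i}\neq x_3^{i}$, which fails when $1\neq x_3^{2i}\in H$.

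What your argument (and the paper's) actually proves is the theorem under the extra hypothesis $y^{2^l}\notin\langle x_3^2\rangle$, for example $\langle x_3\rangle\cap H=1$. Your fallback assumption is of this kind. Adopting it changes the statement rather than proving the one given.
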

\begin{proof}
 Let $ H =  \langle x_1, x_2 \rangle $, $ H_1 = \langle x_1, x_2, x_3 \rangle$ and $ [H_1:H]=t $. Then $ H_1 = H\bigsqcup_{i=1}^{t-1}  x_3H$. Since $ x_3 y = y x_3$, therefore, $ x_3 x_1 = x_1 x_3 $. It follows from Theorem \ref{k11} that $ yH$ is inverse-closed and contains no involution. Since $ x_3 x_2 = x_2 x_3 $, $ (yH)^{-1} = yH $ and $ x_3 y = y x_3$, therefore, $ [yx_3^{i}H]^{-1} = yx_3^{-i}H$. Suppose $ yx_3H $ contains an involution. To proceed further, we consider the following two cases.\\

\noindent\textbf{Case I:} When $ x_3^{-i}=x_3^{i}$. Suppose $ yx_3^i\langle x_1\rangle $ contains an involution. Then there exists $ m \in \{0, 1, \dots, 2^{l}-1 \} $ such that\\
    \begin{align*}
        yx_3^ix_1^{1-m} & = (yx_3^ix_1^{1-m})^{-1}\\
       \iff yx_1^{1-m} & = x_1^{m-1}y^{-1}\\
       \iff yy^{-2(1-m)} & = y^{-2(m-1)}y^{-1}\\
       \iff y^{2m-1} & = y^{-2m+1}.
    \end{align*}
    Therefore, $ 2m-1  = -2m+1  + t_1 2^{l+1} ;$ $ t_1\in\mathbb N\cup \{0\}.$ This implies that $ 2m = 1+2(2^{l-1}t_1),$ which has no integer solution. Hence, $ yx_3^i\langle x_1\rangle $ contains no involution. Suppose $ yx_3^ix_2\langle x_1\rangle $ contains an involution.    
    Then there exists $ s \in\mathbb Z $ such that  \begin{align*}
       [yx_3^{i}x_2x_1^{s}]^{-1}  & =  yx_3^{i}x_2x_1^{s} \\
       \iff x_1^{-s}x_2y^{-1} & = yx_2x_1^{s}\\
        \iff y^{2s}y^{-k}x_2 & = yx_2y^{-2s}\\
        \iff y^{(2s-k)}x_2 & = yy^{(-2s)k}x_2.
    \end{align*} 
      Therefore,
    \begin{align*}
        2s-k & = 1-2sk + t_2 2^{l+1}; t_2\in\mathbb N \cup \{0\}\\
        \iff (2s-1)(1+k) & = t_22^{l+1}. 
    \end{align*}
    Since $ k $ is odd, therefore, $ k\equiv -1 \pmod{2^{l+1}}$. which contradicts the given condition.\\
    
\noindent\textbf{Case II:} When $ x_3^{-i}\neq x_3^{i}$. Then $ [yx_3^{i}H]^{-1} \neq yx_3^iH $. Consequently, $ yx_3H $ contains no involution.  

\noindent Hence, $ H_1 $ is not perfect in $ S_n$.
\end{proof}

\noindent Note that the above theorem inspires us to construct a new higher-order code, which is not perfect in $ S_n$ under some restrictions. The following corollary reflects our motivation.  
\begin{cor}\label{cor9}
 Let $ \langle x_1, x_2 \rangle $ be a non-commutative subgroup of order $2^{l+1} $ such that $ y^{2}=x_1^{-1} $  and $o(x_1) = 2^l $, $ o(x_2) = 2$ for some $ y\in S_n$, $ l\in \mathbb{N}$. Suppose $x_2y^{-1} = y^{-k }x_2 $ for some odd integer $ k $ such that $ k\not\equiv -1 \pmod{2^{l+1}} $. If $ x_i y = y x_i$, $ x_2x_i = x_ix_2 $ and $ x_ix_j = x_jx_i$ for $ i,j \in\{3,4,\dots, m \}$, then $ \langle x_1, x_2,\dots x_m \rangle $ is not perfect in $S_n$. 
\end{cor}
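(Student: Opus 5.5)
The plan is to reduce to Theorem \ref{k11} and then reuse, verbatim, the coset computation from the proof of Theorem \ref{thm8}. The only new ingredient is that $K=\langle x_3,\dots,x_m\rangle$ is abelian and centralizes both $y$ and $x_2$.

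Put $H=\langle x_1,x_2\rangle$, $H_1=\langle x_1,\dots,x_m\rangle$ and $K=\langle x_3,\dots,x_m\rangle$. The first step records the commutation relations. Since $x_iy=yx_i$ and $x_1=y^{-2}$, each $x_i$ with $i\ge 3$ commutes with $x_1$. It also commutes with $x_2$ by hypothesis. Hence $K$ is abelian and centralizes $H$, so $H_1=HK$. Every element of $H_1$ then has the form $hw$ with $h\in H$ and $w\in K$, and $H_1$ is a union of cosets $wH$, $w\in K$.

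The aim is to show that $yH_1$ is inverse-closed and contains no involution; Lemma \ref{lmn2.5} then gives that $H_1$ is not perfect. (If $H_1$ is not a $2$-group one first passes to a Sylow $2$-subgroup via Lemma \ref{lm1}, or simply applies Lemma \ref{lmn2.5} directly. That lemma only needs a self-inverse double coset $D=H_1yH_1$ containing an odd number of left cosets and no involution; here $y$ normalizes $H_1$, so $D=yH_1$ is a single coset.)

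\emph{$y$ normalizes $H_1$.} It centralizes $K$ and $x_1$, and $y^{-1}x_2y\in H$ by the relation $x_2y^{-1}=y^{-k}x_2$ together with Theorem \ref{k11}.

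\emph{Inverse-closure.} For $w\in K$, since $w$ commutes with $y$ and centralizes $H$,
\[
(ywH)^{-1}=(yH)^{-1}w^{-1}=yHw^{-1}=yw^{-1}H,
\]
using $(yH)^{-1}=yH$ from Theorem \ref{k11}. Hence $(yH_1)^{-1}=\bigcup_{w\in K}yw^{-1}H=yH_1$.

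\emph{No involution.} Suppose $ywh$ is an involution, with $w\in K$ and $h\in H$. Then $ywh=(ywh)^{-1}\in yw^{-1}H$. If $w^2\notin H$ the cosets $ywH$ and $yw^{-1}H$ are distinct, which is a contradiction; this is Case II of Theorem \ref{thm8}. If $w^2\in H$, write $w^2=h_0$. Because $w$ is central in $H_1$ and commutes with $y$, we get $(ywh)^2=y h y h\,w^2$. The equation $(ywh)^2=1$ then says $yh\,y\,h\,h_0=1$, and we must rule this out using the explicit forms $h=x_1^s$ or $h=x_2x_1^s$. This is the same exponent computation in $\langle y\rangle$ as in Case I of Theorem \ref{thm8}: it leads either to $2m-1\equiv-(2m-1)\pmod{2^{l+1}}$ (no solution) or to $(2s-1)(1+k)\equiv 0\pmod{2^{l+1}}$, forcing $k\equiv-1$.

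The main obstacle is the extra factor $h_0=w^2\in H\cap K$ when $w^2\ne 1$. I would handle it by noting that $h_0$ is central in $H$ and is a power of $y$ (if it lies in $\langle x_1\rangle$), so it just shifts the exponent by an even amount. I expect that shift to leave the parity contradictions intact. If $h_0\notin\langle x_1\rangle$, centrality in the nonabelian group $H$ rules that out, since the centralizer of $x_2$ in $\langle x_1\rangle x_2$ is empty when $k\not\equiv 1$; the case $k\equiv1$ needs a separate check.
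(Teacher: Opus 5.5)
\textbf{The gap.} The gap is the case you flagged, $w\in K$ with $w^2\in H\setminus\{1\}$, and it cannot be closed. For $h=x_2x_1^{s}$, the relation $yx_2=x_2y^{k}$ gives $yh=x_2y^{k-2s}$ and $(yh)^2=y^{(k-2s)(k+1)}$. This element is nontrivial, since $k-2s$ is odd and $k\not\equiv -1 \pmod{2^{l+1}}$. It lies in $\langle x_1\rangle$ and commutes with $x_2$, because $k^2\equiv 1 \pmod{2^{l+1}}$; so it lies in $Z(H)$. Hence $(ywh)^2=y^{(k-2s)(k+1)}w^2$, and nothing in the hypotheses forbids $w^2=y^{-(k-2s)(k+1)}$. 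The obstruction $(2s-1)(1+k)\equiv 0 \pmod{2^{l+1}}$ was never a parity argument, and multiplying by a suitable central element makes the equation solvable. Your other family, $h=x_1^{s}$, is fine: there $(yh)^2$ generates $\langle x_1\rangle$, so it cannot be cancelled by an element commuting with $x_2$.

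\textbf{A counterexample.} Let $G_0=\langle y,x_2,w\rangle$ with $y^8=x_2^2=1$, $x_2yx_2=y^3$, $w$ central and $w^2=y^4$. This is the central product $SD_{16}\circ C_4$, of order $32$. Put $x_1=y^{-2}$ and $x_3=w$, so $l=2$ and $k=3$. Every hypothesis holds: $\langle x_1,x_2\rangle\cong D_4$, $3\not\equiv -1\pmod 8$, and $x_3$ commutes with $y$ and $x_2$. But $(yx_2w)^2=y\cdot y^3\cdot y^4=1$, so $yH_1$ contains an involution.

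Worse, $H_1$ is actually perfect. Let $G_0$ act on the $16$ left cosets of $\langle yx_2w\rangle$. The action is faithful, since $y^{-1}(yx_2w)y=y^3x_2w$ shows this subgroup of order $2$ is not normal. The subgroup $H_1=\langle y^2,x_2,w\rangle$ consists of the elements $y^ax_2^bw^c$ with $a$ even. It has order $16$, is normal in $G_0$, and does not contain $yx_2w$, so it acts regularly on the $16$ points. A point stabilizer $T\cong S_{15}$ is then a complement, hence an inverse-closed transversal, of $H_1$ in $S_{16}$. So $H_1$ is a perfect code of $\mathrm{Cay}(S_{16},T\setminus\{1\})$.

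\textbf{Consequences.} The statement is false as stated, already for $m=3$, which is Theorem~\ref{thm8}. The paper's proof of Theorem~\ref{thm8} has the same hole: its Case~II deduces $[yx_3^iH]^{-1}\neq yx_3^iH$ from $x_3^{-i}\neq x_3^{i}$, which fails exactly when $x_3^{2i}\in H\setminus\{1\}$. Your argument correctly splits on $w^2\in H$ rather than $w^2=1$. It becomes a complete proof if you add a hypothesis such as $\langle x_3,\dots,x_m\rangle\cap\langle x_1,x_2\rangle=1$. Then $w^2\in H$ forces $w^2=1$, and $(ywh)^2=(yh)^2\neq 1$ by Theorem~\ref{k11}.
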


\noindent To facilitate our understanding of the structure of non-abelian codes in $ S_n $, we first establish the following lemma.
    \begin{lemma}\label{k2}
     Suppose $ k, l \in\mathbb N$ such that $ k< 2^l $ and $ l \ge 2 .$ If $k$ is odd, then there does not exist $ i\in\mathbb N \cup\{0\}$ such that $ k^i \equiv -1 \pmod{2^{l+1}}.$ 
    \end{lemma}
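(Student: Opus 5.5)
The plan is to argue by a parity (sign) comparison modulo $4$, since $2^{l+1}$ is divisible by $4$ once $l\ge 1$, and certainly for $l\ge 2$. Suppose, for contradiction, that some $i\in\mathbb N\cup\{0\}$ satisfies $k^i\equiv -1 \pmod{2^{l+1}}$. Reducing modulo $4$ gives $k^i\equiv -1\equiv 3 \pmod 4$. Because $k$ is odd, $k\equiv 1$ or $k\equiv 3 \pmod 4$, so $k^i \pmod 4$ is $1$ if $k\equiv 1$, and $(-1)^i$ if $k\equiv 3$.

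\textbf{Case $k\equiv 1 \pmod 4$.} Then $k^i\equiv 1\not\equiv 3 \pmod 4$ for every $i$. This contradicts the assumption, so no such $i$ exists.

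\textbf{Case $k\equiv 3 \pmod 4$.} This case is the main obstacle. The congruence modulo $4$ only forces $i$ to be odd, and one must then go up to modulus $2^{l+1}$. Here the hypothesis $k<2^l$ has to be used.

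First try: write $k^i+1=(k+1)(k^{i-1}-k^{i-2}+\cdots+1)$. For odd $i$ the second factor is a sum of $i$ odd terms, so it is odd. Hence $v_2(k^i+1)=v_2(k+1)$, and the assumption $2^{l+1}\mid k^i+1$ forces $2^{l+1}\mid k+1$. But $1\le k<2^l$ gives $2\le k+1\le 2^l<2^{l+1}$, so $2^{l+1}\nmid k+1$. This is the desired contradiction. The case $i=0$ is excluded separately, because $1\equiv -1\pmod{2^{l+1}}$ would require $2^{l+1}\mid 2$, which is false for $l\ge 1$.

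Combining the two cases shows that no $i$ exists. The only delicate points are the $2$-adic valuation identity $v_2(k^i+1)=v_2(k+1)$ for odd $i$ and the bound $k+1\le 2^l$. The hypothesis $l\ge 2$ is not even needed beyond guaranteeing that $4\mid 2^{l+1}$; I would remark that $l\ge 1$ suffices for the argument.
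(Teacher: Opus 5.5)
Your proof is correct, and it takes a genuinely different and more elementary route than the paper's.

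The paper's route is as follows. It checks $l=2$ directly modulo $8$. For $l\ge 3$ it first proves $U_{2^{n}}\cong\mathbb{Z}_2\times\mathbb{Z}_{2^{n-2}}$, by showing $o(5)=2^{n-2}$ by induction and writing every unit uniquely as $(-1)^a5^b$. Then, with $k=\phi(a,b)$, the equation $\phi(t_1a,t_1b)=k^{t_1}=-1=\phi(1,0)$ forces $t_1$ odd, $a=1$ and $b=0$. That is, $k\equiv -1\pmod{2^{l+1}}$, which is incompatible with $k<2^l$.

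You reach the same key fact: a power of $k$ can be $\equiv -1 \pmod{2^{l+1}}$ only if the exponent is odd and $2^{l+1}\mid k+1$. You use only a reduction modulo $4$ and the factorization $k^i+1=(k+1)\cdot(\text{odd})$ for odd $i$, i.e.\ $v_2(k^i+1)=v_2(k+1)\le l$.

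Your version gains brevity and uniformity in $l$: there is no separate base case, and it works for all $l\ge 1$, as you note. It is also independent of the structure theorem. The paper's proof of that theorem leaves a step implicit: uniqueness of the representation $(-1)^a5^b$ needs $-1\notin\langle 5\rangle$. This holds because $5^b\equiv 1\pmod 4$, but it is not argued there. What the paper's approach gains is the global picture of $U_{2^{l+1}}$, from which it is transparent that $-1$ is the only unit having some power equal to $-1$.

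One small simplification: your case split is unnecessary. Even powers of odd numbers are $\equiv 1\pmod 4$, so the mod-$4$ step forces $i$ odd for every odd $k$. The factorization then handles both residue classes at once.
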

    \begin{proof}
We will prove this lemma by considering two cases.\\
       
\noindent \textbf{Case I:} When $ l = 2.$ Then $ k = 1 $ or $ 3.$ Now, $ 3^n \equiv 1 \pmod 8 $  for even $ n\in\mathbb N $ and  $ 3^n \equiv 3 \pmod 8 $  for odd $ n\in\mathbb N .$\\
       
\noindent \textbf{Case II:} When $ l\geq 3.$ We first prove that for $ n\geq 3 $,  $U_{2^n} \cong \mathbb{Z}_2 \times \mathbb{Z}_{2^{n-2}}.$ Let the order of $a$ in $ U_{2^n}$ denoted by $o(a)$. We claim that $o(5) = 2^{n-2}$ in $ U_{2^n}$ for $n\geq 3 $. To prove this, we use induction. For $ n=3 $, we have   $ U_{2^3} = \{1,3,5,7\} $. Now, $ 5^2 \equiv 1 \pmod{ 2^3} $. Therefore, $ o(5) = 2^{3-2}.$ Let $o(5) = 2^{m-2} $ in $  U_{2^m}$. Then $ 5^{2^{m-2}} = t2^m +1 $ for some odd $ t \in \mathbb N.$ Now, 
     \begin{align*}
          5^{2^{m-1}} -1 = ( 5^{2^{m-2}})^2-1 & = (t2^m+1)^2-1 \\
          & = 2(t^2 2^{2m-1}+ t2^m)\\
          & = 2^{m+1}(t^22^{m-1}+t).\\
     \end{align*}
This implies that $  5^{2^{m-1}} \equiv 1 \pmod{2^{m+1}}.$ Since $ t $ is odd, therefore, $ o(5) = 2^{m-1} $ in $ U_{2^{m+1}}.$ Hence, by mathematical induction $o(5) = 2^{n-2}$ in $ U_n$ for $ n\geq 3 .$ Now, $ 2^n-1 \equiv -1 \pmod{ 2^n } $ has order $ 2 $ in $  U_{2^n} $ and  $o( U_{2^n}) =2^{n-1} $. It is clear that $5$ and $-5$ are distinct elements in $  U_{2^n} $. Since $ o(5) = 2^{n-2}$, therefore, every element in $  U_{2^n}$ can be expressed as of the form $ (-1)^a5^b $  for $ a \in \{0,1\}, b \in \{ 0, 1, \dots, 2^{n-2}-1\} $. Claim that this representation is unique.

\noindent Suppose there exist $a' \in  \{0,1\} $ and $ b' \in \{ 0, 1, \dots, 2^{n-2}-1\} $ with $ a\neq a' $ and $ b \neq b' $ such that  $(-1)^{a'} 5^{b'} =  (-1)^{a}5^b $. This implies that $(-1)^{a'-a} 5^{b'-b} = 1 $. Consequently. $a-a'=0$ and $b'-b= t'2^{n-2}$ for some $ t'\in\mathbb{Z}$ as $o(-1) = 2 $ and $ o(5) = 2^{n-2} $ in $ U_n$. Since $ b,b' \in  \{ 0, 1, \dots, 2^{n-2}-1\} $, therefore, we get $ t= 0.$ Consequently, $ b=b'.$

\noindent Let us define a homomorphism $ \phi : \mathbb{Z}_2 \times \mathbb{Z}_{2^{n-2}} \longrightarrow U_{2^n} $ by $ \phi (a,b) = (-1)^{a}5^b .$ From above, we can say that $ \phi $ is an isomorphism. Then $ (1,0) \mapsto (-1).$ Suppose there exists $ t_1 \in \mathbb N $ such that $ k^{t_1} \equiv -1 \pmod{2^{l+1}}$ and $ k < 2^l $ with odd natural number $ k$. This implies that $ (-1)^1k^{t_1} \equiv 1 \pmod{2^{l+1}}.$ Take the group $ G =  U_{2^{l+1}} $ for $ l \geq 3 $. Then $ U_{2^{l+1}} \cong \mathbb{Z}_2 \times \mathbb{Z}_{2^{l-1}}.$ Now, $ \phi(a,b) = (-1)^{a}5^b.$ Then $\phi(1,0) = -1.$
     
\noindent Now, $ k^{t_1} \equiv -1 \pmod{ 2^{l+1}} .$ Then there exists $ (a,b) \in \mathbb{Z}_2 \times \mathbb{Z}_{2^{n-1}} $ such that $\phi ^{-1}(k)   = (a,b)$. Then $ k^{t_1}  = [\phi(a,b)]^{t_1}.$ This implies that $ k^{t_1} = \phi(t_1a,t_1b).$ Consequently, $ \phi(t_1a,t_1b)  = -1 .$ Since $ \phi $ is an isomorphism, therefore, $ t_1a = 1 $ in $\mathbb{Z}_2 $ and $ t_1b = 0 $ in $\mathbb{Z}_{2^{n-1}}.$ This implies $ a= 1 $ and $ t_1 =1 $ in $\mathbb{Z}_2 $. Consequently, $ b=0 $ in $\mathbb{Z}_{2^{n-1}}.$ Thus, $k  \equiv -1\pmod{ 2^{l+1}}$, equivalently, $k = 2^{l+1} -1$ in $U_{2^{l+1}}$. Define a set $ M = \{ x \in \mathbb{N} : x < 2^l \}$. Then $ [k] = [2^{l+1} - 1]  \cap M = \emptyset $, where $[k] $ denotes the set of remainders when the result is divided by $ 2^{l+1}$. This contradicts the existence of $ t_1 $, $ k $, and it follows the desired result.
\end{proof}

\begin{theorem}\label{thm11}
Let $ \langle x_1, x_2 \rangle $ be a non-commutative subgroup of order $2^{l+1} $ such that $o(x_1) = 2^l $, $ o(x_2) = 2$, and  $ y^{2}=x_1^{-1} $ for some $ y \in S_n $, $ l\in \mathbb{N}$. Let $ x_2x_3^{-1} = x_3^{-s}x_2 $, $ x_2y^{-1} = y^{-k}x_2 $, and $ x_3y^{-1} = y^{-k}x_3 $ for some $ s, k \in \mathbb N $. Then  $ \langle x_1, x_2, x_3 \rangle / \langle x_1, x_2 \rangle $ is cyclic.
\end{theorem}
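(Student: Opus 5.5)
The plan is to show that every element of $K:=\langle x_1,x_2,x_3\rangle$ lies in a coset $x_3^{j}H$, where $H:=\langle x_1,x_2\rangle$. Then the left cosets of $H$ in $K$ are exactly $x_3^{j}H$ for $j\ge 0$, so $K/H$ is generated by the single coset $x_3H$. I will first establish a normal subgroup $N=\langle x_1\rangle$ of $K$ and then work modulo $N$.

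\emph{Step 1: $\langle x_1\rangle \trianglelefteq K$.} Since $y^{2}=x_1^{-1}$, the relation $x_3y^{-1}=y^{-k}x_3$ gives
\[
x_3y^{-2}=y^{-k}x_3y^{-1}=y^{-2k}x_3 .
\]
Hence $x_3x_1=x_1^{k}x_3$, that is, $x_3x_1x_3^{-1}=x_1^{k}\in\langle x_1\rangle$. The same computation with $x_2y^{-1}=y^{-k}x_2$ gives $x_2x_1x_2^{-1}=x_1^{k}$, exactly as in the proof of Theorem \ref{k11}. Conjugation is injective and $\langle x_1\rangle$ is finite, so each of $x_2,x_3$ maps $\langle x_1\rangle$ onto itself. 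Therefore $N:=\langle x_1\rangle$ is normalized by all three generators and is normal in $K$.

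\emph{Step 2: the structure of $K/N$.} Since $o(x_2)=2$, the relation $x_2x_3^{-1}=x_3^{-s}x_2$ rewrites as $x_2x_3^{-1}x_2^{-1}=x_3^{-s}$. Thus $x_2$ normalizes $\langle x_3\rangle$, and also $\langle x_3\rangle N$. The group $\overline{K}=K/N$ is generated by $\overline{x_2}$ and $\overline{x_3}$. Because $\overline{x_2}$ normalizes $\langle\overline{x_3}\rangle$ and has order at most $2$, we get
\[
\overline{K}=\langle\overline{x_3}\rangle\langle\overline{x_2}\rangle .
\]
Pulling back to $K$ gives $K=\langle x_3\rangle\langle x_2\rangle N=\langle x_3\rangle H$, using $H=N\langle x_2\rangle$ from $|H|=2^{l+1}=2|N|$ and $x_2\notin N$. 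So every coset of $H$ in $K$ has the form $x_3^{j}H$, and $K/H=\{x_3^{j}H\}$ is generated by $x_3H$.

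\emph{Main obstacle.} The delicate point is what ``$K/H$ is cyclic'' means when $H$ need not be normal in $K$. We have $x_3x_2x_3^{-1}=x_3^{1-s}x_2$, and this lies in $H$ only when $x_3^{1-s}\in H$. If the authors intend a genuine quotient group, I would prove normality of $H$ by showing $x_3^{1-s}\in H$. The first approach I would try is to compare the two relations $x_2x_3^{-1}=x_3^{-s}x_2$ and $x_3y^{-1}=y^{-k}x_3$, projecting to $\overline{K}$ to get $\overline{x_3}^{\,1-s}\in\langle\overline{x_2}\rangle$. If that fails, I would state the result as cyclicity of the coset space, which is what Step 2 proves. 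In either reading the quotient is a cyclic group or cyclic coset space generated by $x_3H$, of order dividing $o(x_3)$. This description of $K/H$ is the form needed to extend the non-perfectness argument of Theorem \ref{thm8} to $K$.
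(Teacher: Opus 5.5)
Your argument is correct and is essentially the paper's. Both rest on two facts: $x_3$ normalizes $\langle x_1\rangle$ (from $x_3y^{-1}=y^{-k}x_3$), and $x_2$ normalizes $\langle x_3\rangle$. Together these give $\langle x_1,x_2,x_3\rangle=\langle x_3\rangle H$, i.e.\ every left coset of $H$ is some $x_3^{j}H$, which is exactly the decomposition invoked in Theorem~\ref{thm10}. Your passage to the quotient by $\langle x_1\rangle$ is just a cleaner packaging of the paper's explicit commutation formulas.

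Your normality worry is well founded, so do not try to prove $x_3^{1-s}\in H$. Take $C_8\rtimes(S_3\times C_2)$ (viewed inside $S_{96}$ via the regular representation), with $S_3=\langle a,b\rangle$ acting trivially on $C_8=\langle y\rangle$ and the central involution $c$ acting by $y\mapsto y^{3}$. Then $x_1=y^{-2}$, $x_2=bc$, $x_3=ac$ satisfy all the hypotheses with $k=3$, $s=5$, yet $x_3x_2x_3^{-1}=a^{2}bc\notin H$. So only the coset-space reading is valid, and that is also all the paper's own argument establishes.
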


\begin{proof}
Let $ H =  \langle x_1, x_2 \rangle $, $ H_1 =  \langle x_1, x_2, x_3 \rangle $,  $ [H_1:H] = t $, and  $ o(x_3) = t_1 $. It is given that $ x_2x_3^{-1} = x_3^{-s}x_2 $, therefore, 
\begin{equation}\label{eqn1}
  x_2x_3^{-i} = x_3^{-is}x_2.   
\end{equation}
Putting $ i = t_1-1 $ in equation $(\ref{eqn1}) $, we get $ x_2x_3 = x_3^{s}x_2$. We have $ x_2y^{-1} = y^{-k}x_2 $ and $  y^{2}=x_1^{-1} $.  So 
\begin{equation}\label{eqn2}
    x_2x_1 = (x_2y^{-1})y^{-1} = y^{-2k}x_2 = x_1^kx_2
\end{equation}
and 
\begin{equation}\label{eqn7}
    x_2y^{-i} = y^{-ik}x_2.
\end{equation}
Now, $ x_3^{2}y^{-1} = x_3(x_3y^{-1}) = x_3(y^{-k}x_3) = (x_3y^{-1})(y^{-(k-1)}x_3) = y^{-k}(x_3y^{-1})y^{-(k-2)}x_3 = y^{-2k}(x_3y^{-1})y^{-(k-3)}x_3 = y^{-3k}(x_3y^{-(k-3)})x_3^{2} .$ Continuing this process, we get $ x_3^{2}y^{-1} = y^{-k\cdot k}x_3 = y^{-k^2}x_3^2.$ Thus, for $ i \in \mathbb{N} $,
\begin{equation}\label{eqn3}
 x_3^{-i}y^{-1} = x_3^{t_1-i}y^{-1} = y^{-k^{t_1-i}}x_3^{-i}.
\end{equation}
Similarly, from equation $(\ref{eqn2})$, we get 
\begin{equation}\label{eqn4}
x_2x_1^i = x_1^{ik}x_2 
\end{equation}
 and  
 \begin{equation}\label{eqn8}
  x_3^jx_1^i = x_1 ^{ik^j}x_3^j.    
 \end{equation}
 Consequently, for $ i, j \in\mathbb{N}$,   
\begin{equation}\label{eqn5}
x_1^{-i}x_3^{-j} = x_3^{-j}x_1^{-ik^j}, \; \text{i.e.,} \; \;  x_1^{2^l-i}x_3^{t_1-j} = x_3^{t_1-j}x_1^{2^l-ik^j}.
\end{equation}

\noindent Putting $ i = 2^l-1, j = t_1-1$ in equation (\ref{eqn5}), we get $ x_1x_3 = x_3x_1^{k^{t_1-1}}$. This implies that $ x_1^ix_3 = x_3x_1^{ik^{t_1-1}}$. Now, $ x_1x_3^2 =  (x_3x_1^{(k^{t_1-1}-1)})(x_3x_1^{k^{t_1-1}})=  (x_3x_1^{(k^{t_1-1}-2)})(x_1x_3)x_1^{k^{t_1-1}} = (x_3x_1^{(k^{t_1-1}-2)})(x_3x_1^{2k^{t_1-1}}).$ Continuing this process, we get $ x_1x_3^2 = x_3^2x_1^{k^{2(t_1-1)}}$. Thus, $ x_1x_3^i = x_3^ix_1^{k^{i(t_1-1)}}$. Hence, $  x_1^ix_3^j = x_3^jx_1^{ik^{j(t_1-1)}} $ for $ i,j\in \mathbb{N}.$ Since $ x_2x_3 = x_3^{-s}x_2$ and $ x_1x_3 = x_3x_1^{k^{t_1-1}}$, therefore, $x_1x_2x_3^iH = x_3^jH$ for some $ i,j \in\mathbb{N}\cup\{0\}.$ Similarly, $ x_1^ix_2x_3^jH = x_3^kH$ for some $ i, j, k \in\mathbb{N}\cup\{0\}.$ Hence $ \langle x_1, x_2, x_3 \rangle / \langle x_1, x_2 \rangle $ is a cyclic group.
\end{proof}
\begin{theorem}\label{thm10}
Let $ \langle x_1, x_2 \rangle $ be a non-commutative subgroup of order $2^{l+1} $ such that $ y^{2}=x_1^{-1} $  and $o(x_1) = 2^l $, $ o(x_2) = 2$ for some $ y \in S_n $, $ l\in \mathbb{N}$. Let $ x_2x_3^{-1} = x_3^{-s}x_2 $, $ x_2y^{-1} = y^{-k}x_2 $, and $ x_3y^{-1} = y^{-k}x_3 $ for some $ s, k \in \mathbb N $. If  $ k $ is odd and $k < 2^l $ then $ \langle x_1, x_2, x_3 \rangle $ is not perfect in $S_n$.
\end{theorem}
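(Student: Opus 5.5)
The natural route is to follow Theorem \ref{thm8}. The plan is to use $y$ (with $y^2=x_1^{-1}$) to build an inverse-closed union of left cosets of $H_1=\langle x_1,x_2,x_3\rangle$ that contains no involution, and then to conclude through Lemma \ref{lmn2.5}. Write $H=\langle x_1,x_2\rangle$.

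The first step is to handle the hypothesis on $k$. Since $k$ is odd and $k<2^l$, we have $k\not\equiv -1 \pmod{2^{l+1}}$ (for $l=1$ this forces $k=1$, and $1\not\equiv -1\pmod 4$). So Theorem \ref{k11} applies. We get $y\notin H$, and $yH=y\langle x_1\rangle\sqcup yx_2\langle x_1\rangle$ is inverse-closed and contains no involution. Lemma \ref{k2} gives more: for $l\ge 2$, no power $k^j$ is congruent to $-1$ modulo $2^{l+1}$. This is exactly what will be needed when $x_3$ conjugates $y$ to $y^{k^j}$.

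Next comes the coset structure. By Theorem \ref{thm11}, $H_1/H$ is cyclic, generated by the image of $x_3$, so
\[
H_1=\bigsqcup_{j=0}^{t-1} x_3^jH .
\]
Therefore $yH_1=\bigsqcup_j yx_3^jH$. The relation $x_3y^{-1}=y^{-k}x_3$ iterates to $x_3^jy^{-1}=y^{-k^j}x_3^j$, as in equation (\ref{eqn3}). Using this, I would compute
\[
(yx_3^jx_2^\epsilon x_1^i)^{-1}=x_1^{-i}x_2^\epsilon x_3^{-j}y^{-1},
\]
and push $y^{-1}$ to the left as an odd power of $y$. Since $y^2=x_1^{-1}\in H$, this shows $(yH_1)^{-1}=yH_1$. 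I also need $y\notin H_1$; I would argue this via $o(y)=2^{l+1}$ together with the normal form from Theorem \ref{thm11}. This is a point to check carefully, and if $y$ happened to lie in $H_1$ I would replace $y$ by a suitable element of $y H$.

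The main obstacle is showing that $yH_1$ contains no involution. Suppose $g=yx_3^jx_2^\epsilon x_1^i$ satisfies $g^2=1$. After moving all powers of $y$ and $x_1$ to the front, using $x_2y^{-1}=y^{-k}x_2$ and $x_3^jy^{-1}=y^{-k^j}x_3^j$, the equation $g=g^{-1}$ reduces to an identity of the form
\[
y^{a}w=y^{b}w' \quad\text{with } w,w'\in\langle x_2,x_3\rangle .
\]
Comparing these as in the proofs of Theorems \ref{k11} and \ref{thm8}, the case $\epsilon=0$ forces a congruence of the shape $2m-1\equiv -(2m-1)k^j \pmod{2^{l+1}}$. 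Since $2m-1$ is odd, this gives $k^j\equiv -1\pmod{2^{l+1}}$, contradicting Lemma \ref{k2}. The case $\epsilon=1$ similarly gives $(2m-1)(1+k^{j'})\equiv 0$ for some $j'$, again impossible. The delicate points are justifying the comparison of exponents of $y$, which needs the $y$-parts to be separable from $\langle x_2,x_3\rangle$, and the case $l=1$, which must be checked directly. Once this is done, $yH_1$ is a self-inverse double coset union with an odd number ($1$) of blocks and no involution, so Lemma \ref{lmn2.5} shows that $H_1$ is not perfect in $S_n$.
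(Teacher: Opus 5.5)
Your plan follows the paper's: write $H_1=\bigsqcup_j x_3^jH$, show that $yH_1$ is inverse-closed, and exclude involutions with Lemma~\ref{k2}. The inverse-closedness is fine, since $y$ normalizes $H_1$ and $y^2\in H_1$.

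\textbf{The gap.} It is exactly the step you call delicate. From $y^aw=y^bw'$ with $w,w'\in\langle x_2,x_3\rangle$ you may conclude $a\equiv b\pmod{2^{l+1}}$ only if $\langle y\rangle\cap\langle x_2,x_3\rangle=1$. The hypotheses do not give this, and without it $yH_1$ can contain an involution.

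First note that the hypotheses force $l\ge 2$ and $k=2^l-1$. From $x_2^2=1$ we get $y=y^{k^2}$, so $k^2\equiv 1\pmod{2^{l+1}}$. The value $k=1$ would make $\langle x_1,x_2\rangle$ abelian. In particular your case $l=1$ is vacuous.

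\textbf{Example.} Take $l=2$, $k=3$ and, in $S_{16}$, let
\[
y=(1\,2\,3\,4\,5\,6\,7\,8)(9\,10\,11\,12\,13\,14\,15\,16),\qquad x_2=(2\,4)(3\,7)(6\,8)(10\,12)(11\,15)(14\,16),
\]
\[
c=(1\,9\,5\,13)(2\,10\,6\,14)(3\,11\,7\,15)(4\,12\,8\,16),\qquad x_1=y^{-2},\qquad x_3=cx_2 .
\]
Then:
\begin{itemize}
\item $x_2yx_2=y^3$, and $c$ commutes with $y$ and $x_2$ with $c^2=y^4$;
\item hence $\langle x_1,x_2\rangle$ is dihedral of order $8$ and $x_3y^{-1}=y^{-3}x_3$;
\item $x_2x_3^{-1}=c^{-1}=x_3^{-1}x_2$, so the relation holds with $s=1$.
\end{itemize}
All hypotheses hold, yet
\[
(yx_3)^2=yc\,(x_2yx_2)\,c=y^4c^2=1 .
\]
In your notation, $yx_3=y^{-3}x_3^{-1}$ although $1\not\equiv-3\pmod 8$. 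This happens because $x_3^2=y^4\in\langle y\rangle\cap\langle x_2,x_3\rangle$.

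Moreover $y\notin H_1$, since $H_1$ preserves the parity of points and $y$ does not. So $H_1$ is normal of index $2$ in $\langle H_1,y\rangle$. By the criterion for normal subgroups (Huang et al.), $H_1$ is a perfect code of $\langle H_1,y\rangle$. Hence no argument built on the coset $yH_1$ can prove the theorem.

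\textbf{The paper's proof.} It has the same hole. Its Case~II infers from $x_3^i\neq x_3^{-i}$ that $yx_3^i\langle x_1\rangle$ contains no involution. That fails here, because $x_3^2\in\langle x_1\rangle$ makes the coset self-inverse.

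In this particular example $H_1$ is nevertheless not perfect in $S_{16}$. The element $x=y\,(9\,13)(10\,14)(11\,15)(12\,16)$ normalizes $H_1$, satisfies $x^2=y^2$, and one checks that $xH_1$ contains no involution. So in general a witness has to be sought elsewhere in $N_{S_n}(H_1)$.

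\textbf{A repair.} Your exponent comparison becomes valid if you add the hypothesis $\langle y\rangle\cap\langle x_2,x_3\rangle=1$. Then $H_1=\langle y^2\rangle\langle x_2,x_3\rangle$ and $y\notin H_1$. Every element of $yH_1$ has the form $y^mw$ with $m$ odd, and $(y^mw)^2=y^{m(1+\chi)}w^2\neq 1$, where $wyw^{-1}=y^{\chi}$ with $\chi\in\{1,k\}$.

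Finally, your fallback for the case $y\in H_1$ does not work: then $yH\subseteq H_1$, so replacing $y$ by an element of $yH$ does not help.
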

\begin{proof}
 Let $ H =  \langle x_1, x_2 \rangle $, $ H_1 =  \langle x_1, x_2, x_3 \rangle $,  $ [H_1:H] = t $, and  $ o(x_3) = t_1 $. It follows from Theorem \ref{thm11} that \begin{equation}\label{eqn6}
H_1 = H \bigsqcup_{i=1}^{t-1} x_3^iH.
\end{equation}
Now,
\begin{align*}
[yx_3^i \langle x_1 \rangle]^{-1} & = \{yx_3^ix_1^j:j= 0, 1,\dots, 2^{l}-1 \}^{-1}\\
& = \{ x_1^{-j}x_3^{-i}y^{-1}:j =0, 1,\dots, 2^{l}-1 \} \\
& = \{x_3^{-i} x_1^{-jk^{i}}y^{-1} : j = 0, 1,\dots, 2^{l}-1 \}\hspace{2.5cm} (\text{from} \; (\ref{eqn5}))\\    
& =  \{x_3^{-i} y^{-1}x_1^{-jk^{i}} : j = 0, 1,\dots, 2^{l}-1 \}\hspace{2.5cm} (\text{as} \; x_1y = yx_1)\\      
& = \{  y^{-k^{(t_1-i)}}x_3^{-i} x_1^{-jk^{i}}:j = 0, 1,\dots, 2^{l}-1 \} \hspace{1.78cm} (\text{from} \; (\ref{eqn3})) \\
& = \{ yy^{-(k^{(t_1-i)}+1)}x_3^{-i}x_1^{-jk^{i}} :j= 0, 1,\dots, 2^{l}-1 \},\\
\end{align*}
and these hold for $ 1\leq i \leq t-1.$ Since $k$ is odd, therefore, there exist $ m', r', n' \in  \mathbb N \cup \{0\} $ such that $ k^{t_1-i}+1 = 2m'$ and $ m'=n'2^l + r' $ with $ 0\leq r' < 2^l $. Then
\begin{align*}
[yx_3^i \langle x_1 \rangle]^{-1} & = \{ yy^{-2m'}x_3^{-i} x_1^{-jk^{i}} :j = 0, 1,\dots, 2^{l}-1 \}. \\
& = \{ yx_1^{m'}x_3^{-i} x_1^{-jk^{i}} :j = 0, 1,\dots, 2^{l}-1 \} \\
& = \{ yx_1^{r'}x_3^{-i} x_1^{-jk^{i}} :j = 0, 1,\dots, 2^{l}-1 \} \\
& = \{ yx_1^{2^l-(2^l-r')}x_3^{-i} x_1^{-jk^{i}} :j = 0, 1,\dots, 2^{l}-1 \} \\
& = \{ yx_3^{-i} x_1^{2^l-(2^l-r')k^i}x_1^{-jk^{i}} :j = 0, 1,\dots, 2^{l}-1 \} \hspace{1.95cm}(\text{from} \; (\ref{eqn5})) \\
& = \{ yx_3^{-i} x_1^{(r'-j)k^{i}} :j = 0, 1,\dots, 2^{l}-1 \}\\
& =  y x_3^{-i}\langle x_1 \rangle . \\
\end{align*}

\noindent Now,
\begin{align*}
[yx_3^ix_2 \langle x_1 \rangle]^{-1} & = \{yx_3^ix_2x_1^j:j = 0, 1,\dots, 2^{l}-1 \}^{-1}\\
    & = \{ x_1^{-j}x_2x_3^{-i}y^{-1}:j = 0, 1,\dots, 2^{l}-1 \} \\
      & = \{ x_1^{-j}x_2y^{-k^{(t_1-i)}}x_3^{-i} : j = 0, 1,\dots, 2^{l}-1 \}\hspace{2.3cm} (\text{from} \; (\ref{eqn3}))\\
       & = \{ x_2x_1^{-jk} y^{-k^{(t_1-i)}}x_3^{-i} :j = 0, 1,\dots, 2^{l}-1 \}\hspace{.5cm} (\text{with the help of} \; (\ref{eqn2}))\\
    & = \{ x_2 y^{2jk-(k^{(t_1-i)})}x_3^{-i} :j = 0, 1,\dots, 2^{l}-1 \} \hspace{1.65cm} (\text{as} \; y^2 = x_1^{-1})\\
     & = \{ x_2y^{k} y^{2jk-(k^{(t_1-i)}-k)}x_3^{-i} :j = 0, 1,\dots, 2^{l}-1 \}\\
     % & = \{ yx_2x_1^mx_3^{-i} :j = 0, 1,\dots, 2^{l}-1 \} \hspace{2cm} (\text{by the given condition}),\\
\end{align*}
and these holds for $ 1\leq i \leq t-1.$ Since $k$ is odd, therefore, there exists $ m_j \in\mathbb{Z} $ such that $ 2jk-k^{(t_1-i)}-k = -2m_j$.\\

\noindent\textbf{Case A:} When $ m_j\leq 0$.  Then  
        \begin{align*}
             [yx_3^ix_2 \langle x_1 \rangle]^{-1} & =  \{ yx_2x_1^{m_j}x_3^{-i} :j = 0, 1,\dots, 2^{l}-1 \} \hspace{1cm}(\text{by the given condition})\\
 & = \{ yx_2x_1^{-|m_j|}x_3^{-i} : j = 0, 1, \dots, 2^{l}-1 \} \\
        & = \{ yx_2x_3^{-i} x_1^{-|m_j|k^i} :j = 0, 1,\dots, 2^{l}-1 \}\hspace{2.4cm} (\text{from} \; (\ref{eqn5}))  \\
         & = \{ yx_3^{-is}x_2 x_1^{m_jk^i} :j = 0, 1,\dots, 2^{l}-1 \} \hspace{2.65cm} (\text{from} \; (\ref{eqn1}))\\
        & =  y x_3^{-is}x_2 \langle x_1 \rangle. \\             
\end{align*}
\textbf{Case B:} When $ m_j > 0$. Then there exist $ n_j, r_j\in\mathbb{N}\cup\{0\}$ such that $ m_j = n_j2^l+r_j $ with $ 0\leq r_j<2^l$. Therefore, 
 \begin{align*}
     [yx_3^ix_2 \langle x_1 \rangle]^{-1} & =  \{ yx_2x_1^{r_j}x_3^{(t_1-i)} : j = 0, 1, \dots, 2^{l}-1 \} \\   
     & = \{ yx_2x_1^{2^l-(2^l-r_j)}x_3^{(t_1-i)} : j = 0, 1, \dots, 2^{l}-1 \} \\
        & = \{ yx_2x_3^{t_1-i} x_1^{2^l-(2^l-r_j)k^i} :j = 0, 1,\dots, 2^{l}-1 \}\hspace{1.5cm} (\text{from} \; (\ref{eqn5}))  \\
         & = \{ yx_3^{-is}x_2 x_1^{r_jk^i} :j = 0, 1,\dots, 2^{l}-1 \} \hspace{2.77cm} (\text{from} \; (\ref{eqn1}))\\
        & =  y x_3^{-is}x_2 \langle x_1 \rangle. \\
        \end{align*}
The above arguments and the Theorem \ref{thm8} evidence that $yH_1$ is inverse-closed. We consider the following four cases to show that $ yH_1 $ contains no involution. \\

\noindent \textbf{Case I:} When $ x_3^i = x_3^{-i } $ for some $ i\in \{1, 2, \dots, t-1\} .$ Then $[yx_3^i \langle x_1 \rangle]^{-1} = yx_3^i\langle x_1 \rangle.$ Suppose $  yx_3^i \langle x_1 \rangle $ contains an involution. Then there exists $ k_1 \in \{ 0, 1, \dots, 2^{l}-1 \} $ such that $yx_3^ix_1^{k_1}  =  (yx_3^ix_1^{k_1})^{-1}.$ This implies $yx_3^ix_1^{k_1}  = x_1^{-k_1}x_3^{-i}y^{-1}.$ Then it follows from equations $(\ref{eqn3})$ and $(\ref{eqn5})$ that $yx_1^{k^{i}k_1}x_3^{i}  = x_1^ {-k_1}y^{-k^{t_1-i}}x_3^{-i}.$ Consequently, $y^{1- 2k_1k^{i}}  = y^{2k_1-k^{t_1-i}}.$ Therefore, $y^{1-2k_1k^{i}-2k_1+k^{i}}  =1 .$ This implies $(2k_1-1)(k^i+1) = s'2^{l+1}$ for some $ s'\in\mathbb{Z}.$ Then $ k^i  \equiv -1 \pmod{2^{l+1}}.$ Since $ k <2^l  $ and $k$ is odd, therefore, we arrive at a contradiction due to the Lemma \ref{k2}.\\
    
\noindent \textbf{Case II:} When $ x_3^i \neq x_3^{-i } $ for $ i\in \{1, 2, \dots, t-1\} .$ Then  $  [yx_3^i \langle x_1 \rangle] $ does not contain an involution.\\
        
\noindent \textbf{Case III:} When $ x_3^i = x_3^{-is} $ for some $ i\in \{1, 2, \dots, t-1\} $. Suppose $  [yx_3^ix_2\langle x_1 \rangle] $ contains an involution. Then there exists $  m \in \{ 0, 1,\dots, 2^{l}-1 \} $ such that 
\begin{align*}
yx_3^ix_2x_1^{m} & =  (yx_3^ix_2x_1^{m})^{-1}\\
\iff yx_3^ix_2 x_1^m & = x_2 x_1^{-mk}x_3^{-i}y^{-1}\hspace{3cm} (\text{from}\; (\ref{eqn4}))\\
\iff yx_2x_3^{is}x_1^m & = x_2 x_1^{-mk}y^{-k^{t_1-i}}x_3^{-i}\hspace{2.1cm} (\text{from}\; (\ref{eqn1}),\; (\ref{eqn3}))\\
\iff x_2y^kx_3^{is}x_1^m & = x_2 x_1^{-mk}y^{-k^{t_1-i}}x_3^{-i}\hspace{2.1cm} (\text{from}\; (\ref{eqn7}))\\
\iff y^kx_3^{-i}x_1^m & = x_1^{-mk}y^{-k^{t_1-i}}x_3^{-i}\hspace{2.457cm} (\text{as} \; x_3^{is} = x_3^{-i})\\
\iff y^kx_3^{t_1-i}x_1^m & = x_1^{-mk}y^{-k^{t_1-i}}x_3^{-i}\\
\iff y^kx_1^{mk^{t_1-i}}x_3^{-i} & = x_1^{-mk}y^{-k^{t_1-i}}x_3^{-i}\hspace{2.5cm} (\text{from}\; (\ref{eqn8}))\\
\iff y^{k-2mk^{t_1-i}} & = y^{2mk-k^{t_1-i}}.\\
 \end{align*}
This shows that, $ {k-2mk^{t_1-i}-2mk+k^{t_1-i}} \equiv 0 \pmod{2^{l+1}} $. This follows that $ (2m-1)(1+k^{t_1-i-1})\equiv 0 \pmod{2^{l+1}}$ as $ k $ is odd. Consequently, $(k^{t_1-i-1}+1)\equiv 0 \pmod{2^{l+1}}$, which is a contradiction due to the Lemma \ref{k2}.\\

\noindent \textbf{Case IV:} When $ x_3^i \neq x_3^{-is} $ for some $ i\in \{1, 2, \dots, t-1\}. $ Then $ [yx_3^{i}x_2\langle x_1 \rangle]^{-1} \neq yx_3^{i}x_2\langle x_1 \rangle   $. Consequently, $ yx_3^{i}x_2\langle x_1 \rangle $ contains no involution.

\noindent Hence, it follows from equation $(\ref{eqn6})$ that $ yH_1 $ is inverse-closed and contains no involution.
\end{proof}

\begin{example}
Let $ G = S_{11}$, $ K = \langle (4\;8)(5\;6), (3\;7)(4\;8), (1\;8\;2\;4)(3\;5\;7\;6), (9\;10\; 11) \rangle $, $ H = \langle (4\;8)(5\;6), (3\;7)(4\;8), (1\;8\;2\;4)(3\;5\;7\;6) \rangle $ and $ H_1 = \langle (3\;7)(4\;8), (1\;8\;2\;4)(3\;5\;7\;6) \rangle $. Then $ H $ is a Sylow $2$-subgroup of $ K $ and let $ H_1 $ be a $ 2 $ subgroup of $ G $. It follows from Theorem \ref{thm10} that $ H $ is not perfect in $ G $. Also by Theorem \ref{thm8}, $ H_1$ is not perfect in $ G $. Therefore, $ K $ is not perfect in $ G $  due to Lemma \ref{lm1}.
\end{example}
Then Theorem \ref{thm10} together with Corollary \ref{cor9} implies the following result.
\begin{cor}
  Let $ \langle x_1, x_2 \rangle $ be a non-commutative subgroup of order $2^{l+1} $ such that $ y^{2}=x_1^{-1} $  and $o(x_1) = 2^l $, $ o(x_2) = 2$ for some $ y\in S_n$, $ l\in \mathbb{N}$. Let $ x_ix_j^{-1}x_i\in \langle x_j\rangle $ and $x_iy^{-1} = y^{-k}x_i $ some $ m, k\in\mathbb{N}$ and for $ i,j\in\{2,3,\dots, m\}$. If $ k $ is an odd integer and $k < 2^l $, then $\langle x_1, x_2,\dots, x_m \rangle $ is not perfect in $S_n$. 
\end{cor}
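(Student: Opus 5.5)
The plan is to exhibit one coset of the full subgroup that is inverse-closed and contains no involution, exactly as in Theorems \ref{k11}, \ref{thm8} and \ref{thm10}, and then apply Lemma \ref{lmn2.5}. Throughout, write $H=\langle x_1,x_2\rangle$ and $H_m=\langle x_1,x_2,\dots,x_m\rangle$. The candidate coset is $yH_m$, where $y$ is the element with $y^2=x_1^{-1}$.

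First, I will record the relations. Every $x_i$ with $i\ge 2$ satisfies $x_iy^{-1}=y^{-k}x_i$, and hence $x_ix_1=x_1^kx_i$, as in equation (\ref{eqn2}). The hypothesis $x_ix_j^{-1}x_i\in\langle x_j\rangle$ plays the role of the relation $x_2x_3^{-1}=x_3^{-s}x_2$ in Theorem \ref{thm11}.

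Next, I argue by induction on $m$ that each successive quotient $\langle x_1,\dots,x_{j}\rangle/\langle x_1,\dots,x_{j-1}\rangle$ is cyclic. This is Theorem \ref{thm11} applied with $x_3$ replaced by $x_j$. It gives a coset decomposition of $H_m$ into pieces of the form $w\,x_2^{\epsilon}\langle x_1\rangle$, where $w$ runs over words $x_3^{a_3}\cdots x_m^{a_m}$ and $\epsilon\in\{0,1\}$. The repeated conjugation formulas, the analogues of (\ref{eqn3}) and (\ref{eqn5}), extend to words $w$: an element $w$ containing $e$ generator letters from $\{x_3,\dots,x_m\}$ acts on $\langle y\rangle$ by $y\mapsto y^{k^{e}}$.

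Then, exactly as in the computations of Theorem \ref{thm10}, I will verify that $(yw\langle x_1\rangle)^{-1}=yw'\langle x_1\rangle$ and $(ywx_2\langle x_1\rangle)^{-1}=yw''x_2\langle x_1\rangle$ for suitable words $w',w''$. Because $k$ is odd, the exponent $k^{e}+1$ is even, so the $y$-powers can be absorbed into $\langle x_1\rangle$. This shows that $yH_m$ is inverse-closed.

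For the absence of involutions I split into the same four cases. When a piece is not mapped to itself under inversion, it contains no involution automatically. When a piece is self-inverse, an involution would give, as in Case I and Case III of Theorem \ref{thm10}, a congruence $(2c-1)(k^{e}+1)\equiv 0\pmod{2^{l+1}}$. Since $2c-1$ is odd, this forces $k^{e}\equiv -1\pmod{2^{l+1}}$, which Lemma \ref{k2} forbids because $k$ is odd and $k<2^l$. In the case where only $x_2$ commutes appropriately, this reduces to Corollary \ref{cor9}.

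The main obstacle is bookkeeping in the non-commutative setting. The hypothesis only controls $x_ix_j^{-1}x_i$, not arbitrary commutators among the $x_i$ for $i\ge 3$, so normal forms $w x_2^{\epsilon}x_1^{c}$ need not be unique or closed under multiplication. To handle this, I would first try to prove that $\langle x_1\rangle$ is normal in $H_m$, using $x_iy^{-1}=y^{-k}x_i$. Since $y$ commutes with $x_1$, every element of $H_m$ then acts on $\langle y\rangle$ by raising to a power of $k$. It would then suffice to argue modulo $\langle x_1\rangle$: any involution in $yH_m$ has the form $yg$ with $g\in H_m$ acting as $y\mapsto y^{k^{e}}$, and $(yg)^2=1$ yields the forbidden congruence via Lemma \ref{k2}. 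This bypasses explicit coset listing.
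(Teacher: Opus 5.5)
Your plan follows the paper's route: the paper proves this corollary in one line from Theorem~\ref{thm10} and Corollary~\ref{cor9}, which amounts to the same coset computation with $yH_m$ as the witness. Inverse-closedness is fine, and easier than you make it: $yx_iy^{-1}=y^{1-k}x_i\in H_m$ because $1-k$ is even, so $y$ normalizes $H_m$, and $y^2\in H_m$ gives $(yH_m)^{-1}=y^{-1}H_m=yH_m$.

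The gap is the exclusion of involutions. If $g\in H_m$ satisfies $gyg^{-1}=y^{k^e}$, then $(yg)^2=y^{1+k^e}g^2$. For $g=ux_1^{c}$ with $u$ one of your words $w$ or $wx_2$, this gives $(yux_1^{c})^2=y^{(1-2c)(1+k^e)}u^2$. So an involution exists if and only if $u^2=y^{(2c-1)(1+k^e)}$, which is a condition on $u^2$. Your congruence $(2c-1)(k^e+1)\equiv 0\pmod{2^{l+1}}$ is what remains only when $u^2=1$; that holds for $u=x_2$, which is why Theorem~\ref{k11} works. The hypothesis $x_ix_j^{-1}x_i\in\langle x_j\rangle$ does not control $x_i^2$. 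Likewise, ``not self-inverse'' cannot be read off from words such as $x_3^{i}\neq x_3^{-i}$, since these may differ by an element of $\langle x_1\rangle$.

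This cannot be repaired, because the statement itself is false. Take $l=2$, $k=3$, $m=3$. On $\mathbb{Z}_8\times\mathbb{Z}_2$ let $y(i,\varepsilon)=(i+1,\varepsilon)$, $c(i,\varepsilon)=(i,\varepsilon+1)$ and $x_2(i,\varepsilon)=(3i+4\varepsilon,\varepsilon)$. Put $x_1=y^{-2}$ and $x_3=x_2c$.

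One checks $x_2yx_2=y^3$, $cy=yc$ and $x_2cx_2=y^4c$. Hence $x_3y^{-1}=y^{-3}x_3$, $x_2x_3^{-1}x_2=x_3$ and $x_3x_2^{-1}x_3=x_2$. Also $\langle x_1,x_2\rangle$ is dihedral of order $8$, so every hypothesis holds.

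Now $H_3=\langle y^2,x_2,c\rangle$ preserves the parity of $i$, so $y\notin H_3$. Yet $(yx_3)^2=y(x_2yx_2)(x_2cx_2)c=y\cdot y^3\cdot y^4c\cdot c=1$, so $yH_3$ contains an involution.

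Moreover, $G=\langle y,x_2,c\rangle$ has order $32$ and $H_3$ has index $2$ in it. The element $yx_3$ is not central, since $y(yx_3)y^{-1}=y^{-1}x_3$. So $G$ acts faithfully on the $16$ left cosets of $\langle yx_3\rangle$, and all the relations above persist in this copy of $G$ inside $S_{16}$. Every conjugate of $yx_3$ lies in $yH_3=G\setminus H_3$, so $H_3$ acts regularly. The stabilizer $S_{15}$ of a point is then a complement to $H_3$. Hence $S=S_{15}\setminus\{1\}$ is inverse-closed and meets every nontrivial coset of $H_3$ exactly once, so $H_3$ is a perfect code of $\mathrm{Cay}(S_{16},S)$.

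The same example satisfies the hypotheses of Theorem~\ref{thm10} with $s=3$. Replacing $x_3$ by $cy^{-2}$, which commutes with $y$ and $x_2$ and generates the same $H_3$, it also satisfies those of Corollary~\ref{cor9}. So appealing to those results does not close the gap either.

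A correct version needs an extra hypothesis: $y\notin H_m$, and no $g\in H_m$ with $gyg^{-1}=y^{k^e}$ has $g^2=y^{-(1+k^e)}$. This is exactly the condition that $yH_m$ contains no involution.
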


\section{Conclusion}
This study provides a comprehensive classification of cyclic $2$-subgroup perfect codes in $ S_n$. Through detailed analysis, we have elucidated the structure and properties of these subgroup codes. Furthermore, our findings extend to a broader range of subgroup codes within the symmetric group $ S_n $, addressing both commutative and non-commutative instances. Moreover, these results help to examine subgroup perfect codes in any finite group if we classify square elements of that group.\\

%%%%%%%%%%%%%%%%%%%%%%%%%%%%%%%%%%%%%

\noindent
\textbf{Author contributions} Ankan Shaw: Conceptualization, Resources, Investigation, Writing -- original draft. Shibesh Kotal: Investigation, Calculation, Writing. Satya Bagchi: Supervision, Review \& editing.\\

\noindent
\textbf{Funding} The first author of the paper would like to thank CSIR-HRDG, India, for support financially to carry out this work. This
work is also supported by the Science \& Engineering Research Board
(SERB), Government of India (Grant No. MTR/2021/000611).\\

\noindent
\textbf{Declarations}

\noindent
\textbf{Ethics approval} This article contains no human subjects, and informed consent is not required.\\

\noindent
\textbf{Consent to publish} All the authors involved in this manuscript give full consent for publication of this submitted article.\\

\noindent
\textbf{Conflict of interest} The authors declare that they have no conflict of interest.

%%%%%%%%%%%%%%%%%%%%%%%%%%%%%%%%%%%%%%%%%%%%%

\end{document}